\documentclass[11pt]{article}
\usepackage{amsmath,amsthm,amsfonts,amssymb,mathrsfs,bm}
\usepackage{amsmath,amsthm,amsfonts,amssymb,bm,wasysym}
\usepackage{epsfig}
\usepackage[usenames]{color}
\usepackage{verbatim}
\usepackage{hyperref}
\usepackage{multicol}
\usepackage{comment}
\usepackage{float}
\usepackage{graphicx}
\usepackage{centernot}
\usepackage{tikz}
\usepackage{color}
\usepackage[]{algorithm2e}
\usepackage{enumerate}

\graphicspath{{./Pics}}
\usepackage[color=green]{todonotes}

\usepackage[normalem]{ulem}



\topmargin 0in
\oddsidemargin .01in
\textwidth 6.5in
\textheight 9in
\evensidemargin 1in
\addtolength{\voffset}{-.6in}
\addtolength{\textheight}{0.22in}
\parskip \medskipamount
\parindent      0pt


\newtheorem{theorem}{Theorem}[section]

\numberwithin{equation}{section}
\newtheorem{lemma}[theorem]{Lemma}
\newtheorem{proposition}[theorem]{Proposition}
\newtheorem{corollary}[theorem]{Corollary}

\newtheorem{remark}[theorem]{Remark}

\newtheorem{claim}[theorem]{Claim}

\newtheorem{question}[theorem]{Question}
\newtheorem{conjecture}[theorem]{Conjecture}
\numberwithin{equation}{section}

\def\N{\mathbb{N}}

\def\R{\mathbb{R}}

\renewcommand{\phi}{\varphi}
\renewcommand{\epsilon}{\varepsilon}

\allowdisplaybreaks

\newcommand{ \mix}{ t_{\mathrm{mix}} }

\newcommand{\1}{{\text{\Large $\mathfrak 1$}}}

\newcommand{ \rel}{ t_{\mathrm{rel}} }

\newcommand{\til}{\widetilde}

\newcommand{\tmix}{t_{\mathrm{mix}}}

\newcommand{\tsep}{t_{\mathrm{sep}}}

\newcommand{\trel}{t_{\mathrm{rel}}}

\newcommand{\pr}[1]{\mathbb{P}\!\left(#1\right)}
\newcommand{\E}[1]{\mathbb{E}\!\left[#1\right]}
\newcommand{\estart}[2]{\mathbb{E}_{#2}\!\left[#1\right]}
\newcommand{\prstart}[2]{\mathbb{P}_{#2}\!\left(#1\right)}
\newcommand{\prcond}[3]{\mathbb{P}_{#3}\!\left(#1\;\middle\vert\;#2\right)}
\newcommand{\econd}[2]{\mathbb{E}\!\left[#1\;\middle\vert\;#2\right]}

\def\cR{\mathcal{R}}

\newcommand{\tn}{|\kern-.1em|\kern-0.1em|}


\newcommand\be{\begin{equation}}
\newcommand\ee{\end{equation}}

\def\la{\lambda}
\newcommand{\tv}[1]{\left\|#1\right\|_{\rm{TV}}}

\newcommand{\tcov}{t_{\mathrm{cov}}}

\newcommand{\taucov}{\tau_{\mathrm{cov}}}
\newcommand{\reff}{R_{\rm{eff}}}

\begin{document}

\title{\bf Covering a graph with independent walks}

\author{Jonathan Hermon
\thanks{University of British Columbia, Vancouver, Canada. E-mail:  {jhermon@math.ubc.ca}. Financial support by NSERC grants.}
\and Perla Sousi
\thanks{University of Cambridge, Cambridge, UK. E-mail:  {p.sousi@statslab.cam.ac.uk}.}
}
\date{}
\maketitle

\begin{abstract}

Let $P$ be an irreducible and reversible transition matrix on a finite state space $V$ with invariant distribution $\pi$. We let $k$ chains start by choosing independent locations distributed according to $\pi$ and then they evolve independently according to $P$. Let $\tau_{\mathrm{cov}}(k)$ be the first time that every vertex of $V$ has been visited at least once by at least one chain and let $\tcov(k)=\E{\tau_{\mathrm{cov}}(k)}$ with $\tcov=\tcov(1)$. We prove that $\tcov(k)\lesssim \tcov/k$. When $k\leq \tcov/\trel$, where $\trel$ is the inverse of the spectral gap, we show that this bound is sharp. For $k\leq \tcov/\tmix$ with $\tmix$ the total variation mixing time of $(P+I)/2$ we prove that $k \cdot \max_{x_1,\ldots,x_k}\estart{\taucov(k)}{x_1,\ldots,x_k} \asymp \tcov$.
\newline
\newline
\emph{Keywords and phrases.} Reversible Markov chain, cover time, Gaussian free field. 
\newline
MSC 2010 \emph{subject classifications.} Primary        60J10, 60J27.
\end{abstract}

\section{Introduction}

Let $P$ be an irreducible transition matrix on the finite state space $V$ and suppose that $P$ is reversible with respect to the invariant distribution $\pi$. 

Let $X^1,\ldots, X^k$ be $k$ independent discrete time Markov chains with matrix $P$. For every~$i$ and~$x\in V$ let $\tau_x^i$ be the first hitting time of $x$ by $X^i$, i.e.\
\[
\tau_x^i = \min\{t\geq 0: X_t^i=x\}.
\]
We let $\taucov(k)$ be the first time that every state $x$ of $V$ has been visited at least once by one of the chains. More formally,
\[
\taucov(k) = \min\{ t\geq 0: \ \forall \ x, \ \exists\ i\leq k \ \text{ s.t. } \tau_x^i \leq t \} = \max_ x \min_{i\leq k} \tau_x^i.
\]
When $k=1$, we write $\taucov = \taucov(1)$ and $\tcov = \max_x \estart{\taucov}{x}$.

When $k>1$, we write 
\[
\estart{\taucov(k)}{x_1,\ldots,x_k}=\econd{\taucov(k)}{X_0^1=x_1,\ldots,X_0^k=x_k}
\]
and we also denote by $\estart{\taucov(k)}{\pi^{\otimes k}}$ the expectation of $\taucov(k)$ when the walks $X^i$ start independently according to $\pi$.
The problem of bounding $\max_x \estart{\taucov(k)}{x,\ldots,x}$ in terms of $\tcov$ was first systematically studied in~\cite{AlonKozmaetal}, where they obtained bounds on the speed-up defined by 
\[
S^k(P) = \frac{\tcov}{\max_{x_1,\ldots,x_k}\estart{\taucov(k)}{x_1,\ldots,x_k}}
\]
for random walks on several classes of graphs. We now state a conjecture from~\cite{AlonKozmaetal}.

\begin{conjecture}[\cite{AlonKozmaetal}]
There exist universal constants $C$ and $C'$ so that for any graph $G$ and a simple random walk on $G$ with transition matrix $P$ for all $k$
\[
C'\log k \leq S^k(P) \leq C k.
\]
\end{conjecture}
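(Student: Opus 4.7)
The conjecture consists of two independent inequalities, which I attack separately.

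\emph{Upper bound $S^k(G)\le Ck$.} This is equivalent to exhibiting a starting configuration $\vec x$ with $\estart{\taucov(k)}{\vec x}\ge c\tcov/k$. I would choose $\vec x=(x^*,\dots,x^*)$ where $x^*$ attains $\max_x\estart{\taucov}{x}=\tcov$, and analyse the walkers through their excursions from $x^*$. For the single walker the trajectory decomposes into i.i.d.\ excursions of mean length $\estart{\tau_{x^*}^+}{x^*}$, and by Matthews' inequality the number of excursions needed to cover $V$ is of order $\max_v p_v^{-1}\cdot\log n$, where $p_v=\prstart{\tau_v<\tau_{x^*}^+}{x^*}$. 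For $k$ walkers starting at $x^*$ the hitting time of $v$ is geometric in $k$-walker excursion rounds with success probability $1-(1-p_v)^k\asymp k p_v$, so its mean is $\asymp 1/(kp_v)$; combined with a matching Matthews lower bound for the $k$-walker problem (applied to a set $A$ realising the Matthews extremum for $\tcov$) this should yield $\estart{\taucov(k)}{x^*,\dots,x^*}\gtrsim\tcov/k$.

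\emph{Lower bound $S^k(G)\ge C'\log k$.} This is equivalent to $\max_{\vec x}\estart{\taucov(k)}{\vec x}\le\tcov/(C'\log k)$. My strategy is to decouple the walkers through the mixing time: running the chain for $\tmix$ steps leaves each walker within constant TV distance of $\pi$, so by independence
\[
\estart{\taucov(k)}{\vec x}\le \tmix + C\,\estart{\taucov(k)}{\pi^{\otimes k}}\le \tmix + C'\tcov/k,
\]
using the paper's bound $\tcov(k)\lesssim\tcov/k$. Since SRW on an $n$-vertex graph satisfies $\tcov\ge n-1$ while $\tmix=O(n^2)$, the ratio $\tcov/\tmix$ grows at least logarithmically and the right-hand side is at most $\tcov/\log k$ whenever $k\ge\log k$ and $\tmix\le \tcov/(2C'\log k)$; in the degenerate cases these conditions fail only when the $\log k$ lower bound is vacuous.

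\emph{Main obstacle.} The crux lies in the upper bound: the standard Matthews upper and lower bounds for $\tcov$ differ by up to a $\log n$ factor on a generic graph, so the excursion argument above only yields $S^k\le Ck\log n$. Eliminating this $\log n$ slack seems to require the Ding--Lee--Peres identification of $\tcov$ with the expected maximum of the Gaussian free field on $G$, together with a $k$-walker analogue of that correspondence (heuristically, replacing the GFF maximum by the maximum of the minimum of $k$ independent GFFs on the same network). A second difficulty is the very-many-walkers regime $k\gtrsim\tcov/\tmix$, where $\taucov(k)$ occurs strictly before the walkers mix, $\pi$-based decoupling becomes unavailable, and one is forced to control the joint geometry of $k$ non-equilibrated trajectories directly; this is where I expect most of the technical work to lie.
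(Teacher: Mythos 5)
The statement you are asked to prove is a \emph{conjecture} in the paper (attributed to Alon, Avin, Kouck\'y, Kozma, Lotker and Tuttle, \cite{AlonKozmaetal}); the paper does not prove it, and in fact proves only the partial results recorded in Proposition~\ref{lem:loweboundontcovk} and Theorem~\ref{thm: cover time lower bound}, valid in the restricted regimes $k\le\tcov/\tmix$ and $k\le\tcov/\trel$ respectively. So there is no ``paper's proof'' to compare against; I will instead assess your sketch on its own terms.

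For the upper bound $S^k(G)\le Ck$, you yourself flag, correctly, that the excursion/Matthews argument loses a $\log n$ factor, so your sketch as written proves only $S^k\lesssim k\log n$. The mechanism the paper uses to eliminate exactly this slack is the Ding--Lee--Peres GFF identification (Theorems~\ref{thm:dingleeperes} and~\ref{thm:isomorphism}) applied through the auxiliary ``geometric-restart'' chain $P^\lambda$, which is what yields Theorem~\ref{thm:indepone} and then Proposition~\ref{lem:loweboundontcovk}; but even so the paper only obtains $S^k\le Ck$ when $k\le\tcov/\tmix$, and beyond that range the direction you are attempting remains open. So your diagnosis of the obstacle is accurate, but it is not resolved.

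For the lower bound $S^k(G)\ge C'\log k$, your decoupling step $\estart{\taucov(k)}{\vec x}\le\tmix+C\tcov/k$ is fine (it is essentially Corollary~\ref{cor:worststartingstate}), but the deduction that follows is a genuine error. From $\tcov\ge n-1$ and $\tmix=O(n^2)$ you can only conclude $\tcov/\tmix\gtrsim1/n$, not that the ratio grows logarithmically; indeed, for SRW on a path of $n$ vertices one has $\tcov\asymp n^2\asymp\tmix$, so $\tcov/\tmix\asymp1$ while $\log k$ is unbounded. Consequently your argument cannot deliver $\tmix\le\tcov/(C'\log k)$ in the regime $k\gtrsim\tcov/\tmix$, and this is precisely the regime you correctly identify in your last paragraph as the technically hard one: there the walkers reach $\taucov(k)$ before they mix, $\pi$-based decoupling is unavailable, and neither your sketch nor the paper's results close the gap. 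In short, the conjecture stands; your plan reproduces, in spirit, the paper's machinery for $k\le\tcov/\tmix$ but contains a false step in the lower bound and leaves the large-$k$ regime open, as does the paper.
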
 

Some previous results in the direction of the above conjecture were obtained earlier in~\cite{BroderKarlinetal}. There the authors determined the order of the cover time of $k$ independent walks started from stationarity in certain graphs.

 In the present paper, our main result establishes a bound on the expectation of $\taucov(k)$ when the $k$ chains start independently according to $\pi$. Namely that for all $k$
\[\widetilde{S}^k=\frac{\tcov}{\estart{\taucov(k)}{\pi^{\otimes k}}} \ge c k, \]
for some absolute constant $c>0$. We also show that if $k \le \tcov/\rel$, where $\rel$ is the relaxation time of $P$ defined below in~\eqref{eq:deftrel}, then $\widetilde{S}^k \le Ck$, for some absolute constant $C$ independent of $k$ and $P$. Finally, we also show that $S^k(P)\asymp k$ whenever  $k \le \tcov/\tmix$, where $\tmix$ is the total variation mixing time of the lazy version of $P$ defined below.

\begin{theorem}\label{thm:indepone}
        There exists a positive constant $C$ so that if $P$ is an irreducible and reversible transition matrix on a finite state space with invariant distribution $\pi$, then for all $k$
                        \[
        \estart{\taucov(k)}{\pi^{\otimes k}}
        \leq  C\cdot \left\lceil\frac{\tcov}{k}\right\rceil.
        \]
\end{theorem}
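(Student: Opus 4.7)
My plan is to reduce to a one-step tail estimate and iterate. For reversible $P$, let $p_x(t):=\prstart{\tau_x>t}{\pi}$; I claim that $p_x$ is log-convex. Indeed, the chain killed upon reaching $x$ is reversible, and (passing to the lazy chain $(P+I)/2$ if necessary) has a non-negative spectrum, so $p_x(t)=\sum_i a_i e^{-\lambda_i t}$ with $a_i,\lambda_i\ge 0$, which is completely monotone. Interpolating log-convexity on $[0,kt]$ at fraction $1/k$ and using $p_x(0)\le 1$ yields the key inequality
\[
p_x(t)^k \le p_x(kt) \qquad \text{for every } t\ge 0 \text{ and } k\ge 1.
\]

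Set $T=C_0\lceil\tcov/k\rceil$ for an absolute constant $C_0$ (so that $kT\ge C_0\tcov$). By independence of the $k$ chains and a union bound,
\[
\prstart{\taucov(k)>T}{\pi^{\otimes k}} \le \sum_{x\in V} p_x(T)^k \le \sum_{x\in V} p_x(kT) = \estart{|U_{kT}|}{\pi},
\]
where $U_t$ denotes the uncovered set at time $t$ for a single chain started from $\pi$. If the right-hand side is at most $1/2$, then iterating over successive blocks of length $T$---at each such time the unconditional joint distribution of the $k$ chains is again $\pi^{\otimes k}$, by independence and single-chain stationarity---gives $\prstart{\taucov(k)>jT}{\pi^{\otimes k}}\le 2^{-j}$, and summing in $j$ yields $\estart{\taucov(k)}{\pi^{\otimes k}}=O(\lceil\tcov/k\rceil)$, as desired.

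The main obstacle is that $\estart{|U_{kT}|}{\pi}$ can greatly exceed $\prstart{\taucov>kT}{\pi}$. On the cycle, for instance, at time $2\tcov\asymp n^2$ one has $\prstart{\taucov>2\tcov}{\pi}\le 1/2$ by Markov, yet $\estart{|U_{2\tcov}|}{\pi}$ is of order $\sqrt{n}$ or larger, because the uncovered set tends to be a connected arc. This Matthews-type gap can cost up to a factor $\log n$, so the naive union bound only yields a bound of order $(1+\log n)\tcov/k$, which is too weak. To close this gap I expect to invoke the Gaussian free field representation of the cover time (Ding--Lee--Peres, Zhai; consistent with the GFF keyword in the abstract): the total local time at each vertex under $k$ independent chains is a sum of $k$ independent contributions whose aggregate mean matches that of a single chain of length $kT$, so the same GFF-based cover-threshold characterization applies and produces the factor-$k$ speed-up directly, bypassing the union bound. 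A secondary issue---that conditioning on non-covering perturbs the state at block endpoints---is handled by restarting against a fresh $\pi^{\otimes k}$ at each block and combining via a geometric-tail argument.
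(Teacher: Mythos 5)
Your log-convexity observation is correct and clean: for the lazy killed chain, $p_x(t)=\prstart{\tau_x>t}{\pi}$ is a non-negative mixture of $\mu^t$ with $\mu\in[0,1]$, hence completely monotone and log-convex, giving $p_x(t)^k\le p_x(kt)$. You then correctly diagnose that the union bound only gives $\prstart{\taucov(k)>T}{\pi^{\otimes k}}\le\estart{|U_{kT}|}{\pi}$, which can be as large as $\mathrm{poly}(n)$ even when $kT\gg\tcov$ (the cycle example you give is exactly right), so the union bound inherently loses a $\log n$ factor. Up to here, the work is sound.

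The gap is in the last paragraph: the sentence ``the total local time at each vertex under $k$ independent chains is a sum of $k$ independent contributions whose aggregate mean matches that of a single chain of length $kT$, so the same GFF-based cover-threshold characterization applies'' is not an argument. The Generalised Second Ray--Knight theorem identifies the local times of a \emph{single} chain, accumulated up to an inverse-local-time stopping rule at a distinguished ground state $z$, with $\tfrac12(\eta+\sqrt{2t})^2$ for the GFF $\eta$ with boundary $z$. It says nothing about $k$ independent chains each run for a fixed deterministic horizon from i.i.d.\ $\pi$-starts: there is no ground state that all $k$ chains return to, no inverse local time, and the local times of the $k$ chains are certainly not exchangeable with consecutive excursions of one chain. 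Matching the mean of the local time is far from enough; the isomorphism is an equality in law, not in expectation, and the covariance structure (effective resistances) is what drives the cover-time characterization.

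What is missing is precisely the paper's central construction. One adjoins a cemetery state $\partial$ to $V$ and defines $P^\lambda$ so that from any $x\in V$ the chain jumps to $\partial$ with probability $\lambda$ and otherwise follows $P$, while from $\partial$ it jumps to $\pi$. Now the excursions of $P^\lambda$ from $\partial$ are i.i.d., each distributed as a $P$-chain started from $\pi$ and killed at an independent geometric($\lambda$) time --- so $k$ excursions, suitably truncated at length $O(1/\lambda)$, dominate the trajectories of $k$ independent $\pi$-started $P$-walks run for $\asymp 1/\lambda$ steps. Choosing $\lambda\asymp k/\tcov$ and taking $\partial$ as the GFF ground state makes the isomorphism theorem directly applicable to $P^\lambda$. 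Two further ingredients are then needed: (a) a comparison $\tcov^\lambda\lesssim\tcov$, proved via Sudakov--Fernique and monotonicity of effective resistances in $\lambda$ (Lemma~\ref{cl:monotonicity} and Claim~\ref{cl:lambdaequal}), and (b) two-sided local-time tail bounds for $P^\lambda$ after a fixed number of excursions (Lemma~\ref{lem:localtimes}), used to show that the truncation discards so little that the first $k$ truncated excursions still cover with constant probability. Your log-convexity lemma, while true, does not feature anywhere in this route; it would become useful only if one could upgrade the union bound to a second-moment or chaining argument that beats the Matthews gap uniformly over all reversible chains, and the paper does not attempt that.
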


As we mentioned above, the problem of bounding the cover time of $k$ walk started from stationarity was also considered in~\cite{BroderKarlinetal} for special cases of graphs.  Moreover, in~\cite[Chapter 6, Proposition~6.17]{AF} building on techniques of~\cite{BroderKarlinetal} they obtain an upper bound on the cover time of $k$-walks from stationarity for regular graphs.

\begin{remark}
        \rm{
     A very minor modification to our proof of Theorem~\ref{thm:indepone} in fact implies a stronger result. In the setup of Theorem~\ref{thm:indepone}, for every $\delta \in (0,1)$ there exists a positive constant $C=C(\delta)$ (independent of the chain) such that for all $k$ with probability at least $1-\delta$ after $ C\lceil\tcov/k \rceil$ steps of the $k$ chains each state $v$ is visited between $ C\delta\tcov \pi(v)$ and $  C\lceil\delta^{-1}\tcov \pi(v) \rceil$ number of times (by the $k$ chains combined). }
\end{remark}

The problem of bounding $\estart{\taucov(k)}{\pi^{\otimes k}}$ was also considered by Efremenko and Reingold in~\cite{EfremenkoReingold} where the following bound was obtained for any random walk on a finite connected graph $G$ (Theorem~4.8)
\[
\tcov \leq k \estart{\taucov(k)}{\pi^{\otimes k}} + O(\tmix \cdot k\log k ) + O(k\sqrt{\tmix \estart{\taucov(k)}{\pi^{\otimes k}}}).
\]

Related results were also obtained by Els\"{a}sser and Sauerwald \cite{Sauer}. The value of $\estart{\taucov(k)}{\pi^{\otimes k}}$ was determined up to smaller order terms
         for a wide range of $k$ in \cite{frogs1}  in the case where $G$ is a $d$-dimensional (discrete grid) torus of side length $n$ (see also \cite[Proposition 2.8]{frogs1} for a certain general result about $\taucov(k)$). It is conjectured in \cite{frogs1} for vertex transitive graphs and proved in the case of tori that the cover time by multiple walks starting from i.i.d.\ stationary initial positions is intimately related to the susceptibility of the frog model on the same graph. Loosely speaking, the susceptibility of the frog model is the minimal lifetime of an infected individual which is sufficient to ensure that the infection reaches all particles before dying out, if initially only the particles at the origin are infected. The cover time by multiple random walks also has some algorithmic applications. We refer the reader to \cite{Thomas1} for a discussion of such applications. For a comprehensive discussion of the existing literature about cover times see \cite{DingLeePeres}.

Before stating the next result, we recall the definition of the total variation mixing time. For two probability measures $\mu$ and $\nu$ we write $\tv{\mu-\nu}$ for the total variation distance between $\mu$ and~$\nu$. For a transition matrix $P$ we write  $P_L=(P+I)/2$ for the lazy version of $P$ in order to avoid periodicity and near-periodicity issues. For every $\epsilon\in (0,1)$, the $\epsilon$-total variation mixing time is defined to be 
\[
\tmix(\epsilon)=\min\{t\geq 0: \max_x\tv{P_L^t(x,\cdot) - \pi} \leq \epsilon\}.
\]
We write $\tmix = \tmix(1/4)$.

\begin{corollary}\label{cor:worststartingstate}
There exists a positive constant $C$ so that if $P$ is an irreducible and reversible transition matrix on a finite state space $V$, then for all $k$ we have 
\begin{align*}
        \max_{x_1,\ldots, x_k} \estart{\tau_{\rm{cov}}(k)}{x_1,\ldots,x_k} \leq C\left( \tmix + \frac{\tcov}{k}\right).
\end{align*}
\end{corollary}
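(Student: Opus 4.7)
The plan is to apply Theorem~\ref{thm:indepone} after a short burn-in in which a constant fraction of the $k$ chains are brought into exact stationarity via a Doeblin-type construction.

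\textbf{Setup.} First I would reduce to the case that $P$ is lazy: replacing $P$ by $(P+I)/2$ at most doubles every cover time (a standard Bernoulli thinning argument) and leaves $\tmix$ unchanged. For the lazy reversible chain I would appeal to standard estimates comparing separation distance to total variation for reversible chains to obtain, at some time $T$ of order $\tmix$, the Doeblin-type lower bound
\[
P^T(x,y) \;\ge\; \tfrac12\,\pi(y) \qquad \text{for all } x,y \in V.
\]
This lets me write $P^T(x,\cdot) = \tfrac12\pi + \tfrac12\mu_x$ for some probability measure $\mu_x$.

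\textbf{Coupling with conditional iid $\pi$ samples.} For each chain $i$ independently, introduce $B_i\sim\mathrm{Ber}(1/2)$ and $Z_i\sim\pi$, all independent. On $\{B_i=1\}$ set $X^i_T = Z_i$; otherwise sample $X^i_T$ from $\mu_{x_i}$; then continue the chain under $P$. The marginal of $X^i_T$ is $P^T(x_i,\cdot)$, and the decisive feature is that conditional on the random index set $B:=\{i:B_i=1\}$ the positions $(X^i_T)_{i\in B}$ are iid $\pi^{\otimes|B|}$ --- the refresh event and the refreshed sample are genuinely independent, a property that a naive maximal total-variation coupling at time $\tmix$ does not have.

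\textbf{Applying Theorem~\ref{thm:indepone}.} Since $|B|\sim\mathrm{Bin}(k,1/2)$, Markov's inequality applied to $k-|B|$ gives $\pr{|B|\ge k/4}\ge 1/2$ for every $k\ge 1$. On this event the cover time by all $k$ chains from time $T$ is bounded above by that of the sub-collection $(X^i)_{i\in B}$, which, conditionally on $B$, is distributed as $\taucov(|B|)$ started from $\pi^{\otimes|B|}$; Theorem~\ref{thm:indepone} then bounds its conditional expectation by $C\lceil \tcov/|B|\rceil \le 4C\tcov/k + C$. On the complementary event the strong Markov property at time $T$ bounds the post-$T$ cover time by $M_k := \max_{x_1,\ldots,x_k}\estart{\taucov(k)}{x_1,\ldots,x_k}$. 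Putting the two cases together,
\[
M_k \;\le\; T + \bigl(4C\,\tcov/k + C\bigr) + \tfrac12\, M_k,
\]
and rearranging gives $M_k \le C'(\tmix + \tcov/k)$, as desired.

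\textbf{Main obstacle.} The essential technical input is the Doeblin-type inequality $P^{O(\tmix)}(x,\cdot)\ge \tfrac12\pi$; this is what decouples the ``success'' event $\{B_i=1\}$ from the refreshed value $Z_i$ and lets the subset of refreshed chains inherit exactly $\pi^{\otimes|B|}$ as its conditional starting distribution. Without it, a naive maximal coupling of $X^i_{\tmix}$ with an independent $\pi$-sample entangles the success indicator with the sample, so conditional on the set $B$ of successful couplings the samples are no longer iid $\pi^{\otimes|B|}$; applying Theorem~\ref{thm:indepone} then cleanly requires either coupling all $k$ chains simultaneously (which would need $\tmix(1/k)\asymp \tmix\log k$ mixing) or a more delicate absolute-continuity argument, either of which would spoil the clean $\tmix$ in the final bound.
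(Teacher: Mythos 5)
Your argument is essentially the paper's: both refresh a constant fraction of the $k$ chains into genuinely iid $\pi$ positions via a Doeblin-type minorization $P^{O(\tmix)}(x,\cdot)\geq c\,\pi(\cdot)$ (so that the refresh indicator is independent of the refreshed sample, making the refreshed sub-collection conditionally iid $\pi$), and then apply Theorem~\ref{thm:indepone} to that sub-collection; the paper takes $c=3/4$ (via Lemma~\ref{lem:standard}) and disposes of the unlikely event that too few chains refresh by a large-deviation bound, contributing a term $\tcov e^{-ck}\lesssim \tcov/k$, whereas you take $c=1/2$ and absorb that event by the bootstrap inequality $M_k\leq T+O(\tcov/k)+\pr{\text{bad}}\,M_k$, which is an equally valid route. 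One small slip: Markov's inequality applied to $k-|B|$ with $|B|\sim\mathrm{Bin}(k,1/2)$ gives $\pr{|B|<k/4}\leq 2/3$, not $\leq 1/2$; the recursion still rearranges since $2/3<1$, and taking the Bernoulli parameter $3/4$ as in the paper makes the Markov step yield the constant you wanted.
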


In the following proposition we prove that when $k\leq \tcov/\tmix$, then the speed-up is linear. The upper bound follows from Corollary~\ref{cor:worststartingstate}, while the proof of the lower bound is similar to the proof of \cite[Lemma 4.9]{EfremenkoReingold}.

\begin{proposition}\label{lem:loweboundontcovk}
        There exist constants $C_1,C_2>0$  so that if $P$ is a finite irreducible and reversible transition matrix with invariant distribution $\pi$ and $k\leq \tcov/(16\tmix)$, then 
        \[
C_1\frac{\tcov}{k}\leq  \estart{\taucov(k)}{\pi^{\otimes k}}\leq  \max_{x_1,\ldots,x_k}\estart{\taucov(k)}{x_1,\ldots,x_k} 
\leq C_2 \frac{\tcov}{k}.
        \]
\end{proposition}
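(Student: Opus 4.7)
The upper bounds are immediate. Since $k \leq \tcov/\tmix$ (and $\tmix \geq 1$), we have $\lceil\tcov/k\rceil \leq 2\tcov/k$ and $\tmix \leq \tcov/k$, so Theorem~\ref{thm:indepone} yields $\estart{\taucov(k)}{\pi^{\otimes k}} \leq 2C\tcov/k$ and Corollary~\ref{cor:worststartingstate} yields $\max_{x_1,\ldots,x_k}\estart{\taucov(k)}{x_1,\ldots,x_k} \leq 2C\tcov/k$.

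For the lower bound, set $T = \estart{\taucov(k)}{\pi^{\otimes k}}$. My plan is to follow the approach of Efremenko--Reingold's Lemma~4.9, which relies on the inequality from their Theorem~4.8 quoted in the paper,
\[
\tcov \leq kT + C\, k\tmix\log k + Ck\sqrt{\tmix\, T}.
\]
This inequality is obtained by a concatenation coupling: since by Markov's inequality the $k$ independent stationary walkers cover $V$ within time $2T$ with probability at least $1/2$, one can embed their trajectories into a single chain $Z$ from $\pi$ of length $O(k(T+\tmix\log k))$ by interleaving with $O(\tmix\log k)$-long bridges; an optimal TV-coupling at each bridge (with a union bound over the $k$ bridges generating the $\log k$ factor) identifies $Z$'s blocks with the $k$ walkers on an event of constant probability. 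On this event $Z$ covers $V$, and a standard Markov-iteration argument together with $\tcov \leq \estart{\taucov}{\pi}+O(\tmix)$ produces the displayed inequality.

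I extract $T \geq c\tcov/k$ by a three-way case analysis on the right-hand side. If $kT\geq \tcov/3$ the conclusion is immediate. If the term $k\sqrt{\tmix T}$ dominates, then $T\gtrsim \tcov^2/(k^2\tmix)\gtrsim \tcov/k$, using $k\tmix\leq \tcov$ from our hypothesis. If the $k\tmix\log k$ term dominates, combining with $k\tmix\leq \tcov$ forces $\log k\gtrsim 1$, and in this regime the lower bound is recovered by a direct hitting-time estimate: take a vertex $v$ with $\estart{\tau_v}{\pi}$ close to $\max_v\estart{\tau_v}{\pi}$, and use exponential tail bounds for $\tau_v$ from stationarity (valid for reversible chains) to lower-bound $\estart{\min_{i\leq k}\tau_v^i}{\pi^{\otimes k}}$.

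The main obstacle will be the third case of the analysis, where the $\log k$ overhead in the displayed inequality (inherited from the union bound over bridge-couplings) prevents a direct conclusion; handling it cleanly requires the dedicated hitting-time argument mentioned above, so that the overhead can be absorbed into the absolute constant $C_1$.
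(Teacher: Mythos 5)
Your upper bound argument is fine. The problem is in the lower bound, specifically in what you call ``case 3'' of your case analysis, and the ``dedicated hitting-time argument'' you propose there does not close the gap.

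Concretely: if you take a vertex $v$ maximising $\estart{\tau_v}{\pi}$ and use the exponential tail for $\tau_v$ under a stationary start, the best you can extract from $\taucov(k)\geq\min_{i\leq k}\tau_v^i$ is $\estart{\taucov(k)}{\pi^{\otimes k}}\gtrsim \max_v\estart{\tau_v}{\pi}/k \asymp \thit/k$. This is \emph{not} $\tcov/k$: by Matthews' bound the two can differ by a factor of $\log|V|$ (e.g.\ for the complete graph or $\Z_n^d$ with $d\geq 3$, $\thit\asymp\tcov/\log n$). In the regime where case 3 is forced, namely $k\tmix\log k\gtrsim\tcov$ while $k\tmix\leq\tcov$, you only know $\thit\gtrsim\tmix\gtrsim\tcov/(k\log k)$, which is far short of $\thit\gtrsim\tcov$ and hence cannot restore the missing $\log$ factor. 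The $\log k$ overhead you inherit from Efremenko--Reingold's Theorem 4.8 is therefore a real obstacle, not a cosmetic one.

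The paper's proof avoids this entirely by not invoking that inequality. It splits a single $\pi$-started trajectory into $k$ blocks of length $t+C\tmix$, uses the decomposition $P^{C\tmix}(x,\cdot)=\tfrac34\pi(\cdot)+\tfrac14\nu_x(\cdot)$ (Lemma~\ref{lem:standard}) to couple each block's starting point with $\pi$ independently with probability $3/4$, and notes that with probability at least $1-e^{-ck}$ at least $k/2$ blocks genuinely start from $\pi$. This yields
$\pr{\taucov>kt+(k-2)C\tmix}\leq\pr{\taucov(k/2)>t}+e^{-ck}\max_x\prstart{\taucov>t}{x}$, and summing over $t$ gives a clean recursion with no $\log k$ loss. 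If you want to keep your case analysis, you would need to replace the Efremenko--Reingold ingredient with this sharper Bernoulli-coupling concatenation; as written the third case does not go through.
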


In the context of random $d$-regular graphs, a stronger asymptotic form of the second statement in the above proposition   has been derived in~\cite{cooperfriezeetal}.

Before stating our final result which strengthens the first statement of Proposition~\ref{lem:loweboundontcovk}, we recall that the relaxation time $\trel$ of a Markov chain with transition matrix $Q$ is defined to be 
\begin{align}\label{eq:deftrel}
\trel = \frac{1}{\gamma},
\end{align}
where $\gamma$ is the spectral gap given by
\[
\gamma = 1-\max\{ \lambda: \lambda \text{ is an eigenvalue of $Q$ with } \lambda \neq 1\}.
\]

\begin{theorem}
\label{thm: cover time lower bound}
There exist two positive constants $c$ and $C$ so that for all finite irreducible reversible Markov chains $P$ with invariant distribution $\pi$ and for all $k \le c\cdot  t_{\mathrm{cov}}/\trel $ then
\[ 
\frac{1}{C}\cdot \frac{t_{\mathrm{cov}}}{k} \le \estart{\taucov(k)}{\pi^{\otimes k}}
\le C\cdot \frac{t_{\mathrm{cov}}}{k}.  \]

\end{theorem}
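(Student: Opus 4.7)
The upper bound is immediate from Theorem~\ref{thm:indepone}: the hypothesis $k \le t_{\mathrm{cov}}/\rel$ combined with $\rel \ge 1$ gives $t_{\mathrm{cov}}/k \ge \rel \ge 1$, so $\lceil t_{\mathrm{cov}}/k \rceil \le 2\, t_{\mathrm{cov}}/k$ and the stated upper bound follows with constant $2C$.

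For the lower bound, I set $t := c\, t_{\mathrm{cov}}/k$ for a sufficiently small absolute constant $c > 0$ and aim to prove $\pr{\taucov(k) > t} \ge c'$, which then gives $\estart{\taucov(k)}{\pi^{\otimes k}} \ge c' t$. Write $\mu_v := \estart{\tau_v}{\pi}$. The central tool is Aldous' exponential approximation for hitting times from stationarity on reversible chains, namely
\[
\left| \prstart{\tau_v > s}{\pi} - e^{-s/\mu_v} \right| \le C_0 \cdot \rel/\mu_v \qquad \text{for all } s \ge 0 \text{ and } v \in V.
\]
By independence of the $k$ chains, $\pr{v\text{ not covered by time }t} = \prstart{\tau_v > t}{\pi}^k$. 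Combining Aldous with $kt \le c\, t_{\mathrm{cov}}$ and $k\,\rel \le t_{\mathrm{cov}}$ yields
\[
\prstart{\tau_v > t}{\pi}^k \ \ge\ \exp\!\big(-C_1\, t_{\mathrm{cov}}/\mu_v\big)
\]
for a constant $C_1$ depending only on $c$, so if $N_t$ denotes the number of uncovered vertices then $\E{N_t} \ge \sum_v e^{-C_1\, t_{\mathrm{cov}}/\mu_v}$.

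I would then argue that this sum is bounded below by a positive absolute constant, uniformly over reversible chains. This is the ``dual'' of the Matthews upper bound and can be established by applying a Matthews-type lower bound on cover time to the superlevel sets $A_s = \{v : \mu_v \ge s\}$, whose cardinalities are controlled by $|A_s| \le \exp(O(t_{\mathrm{cov}}/s))$; the corresponding vertices in the level sets contribute the required constant mass. Once $\E{N_t} \ge c''$ is secured, passing to $\pr{N_t \ge 1} \ge c'''$ requires a Paley--Zygmund second-moment argument: the pairwise term $\prstart{\tau_u > t,\tau_v > t}{\pi}^k$ should admit a joint exponential approximation with error $O(\rel/\min(\mu_u,\mu_v))$, yielding $\E{N_t^2} \lesssim \E{N_t}^2$ and hence $\pr{N_t \ge 1} \gtrsim 1$.

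The principal obstacle is the second-moment estimate: covering events for distinct vertices in a reversible chain can be positively correlated, and controlling this correlation using only the spectral gap (rather than total-variation mixing) requires a careful spectral analysis of the restricted transition matrix on $V \setminus \{u,v\}$. An alternative route, perhaps cleaner, is to adapt the argument behind Proposition~\ref{lem:loweboundontcovk} (following \cite[Lemma~4.9]{EfremenkoReingold}), which splits a single long chain into $k$ independent-like chunks, by replacing its per-chunk total-variation mixing overhead $O(\tmix)$ with an $L^2$-spectral mixing overhead $O(\rel)$. This would directly widen the admissible range of $k$ from $k \le t_{\mathrm{cov}}/\tmix$ to $k \le t_{\mathrm{cov}}/\rel$ while keeping the structure of the earlier proof intact.
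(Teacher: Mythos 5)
Your upper bound is fine: it follows from Theorem~\ref{thm:indepone} exactly as you say, and the paper's proof does the same.

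Your lower-bound plan, however, has two real gaps, and you have already identified the more serious one yourself. The first-moment estimate $\sum_v e^{-C_1 t_{\mathrm{cov}}/\mu_v}\gtrsim 1$ is not established by the sketch you give: the ``Matthews-type lower bound applied to superlevel sets'' controls cover time via $\min_{u,v\in A}\estart{\tau_v}{u}$, not via $\min_{v\in A}\mu_v$, and after running Aldous' approximation and raising to the $k$-th power the effective constant $C_1$ absorbs the (fixed) error constant from the exponential approximation, so one cannot shrink $C_1$ by choosing $c$ small. Even on the complete graph at the extreme $k\asymp t_{\mathrm{cov}}/\rel$, the computation $\E{N_t}\asymp n^{-c}$ shows the first moment is only order one at $t=0$ and decays with $t$; whether it stays bounded below uniformly over all reversible chains and all admissible $k$ is precisely the content that must be proved, not assumed. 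The second gap you name yourself: with only a spectral-gap hypothesis you do not get a uniform minorization of $P^t(x,\cdot)$ by a constant multiple of $\pi$ (that requires $\tmix$, which can be as large as $\rel\log(1/\pi_{\min})$), so neither the joint exponential approximation needed for Paley--Zygmund nor the proposed ``replace $O(\tmix)$ by $O(\rel)$'' modification of Proposition~\ref{lem:loweboundontcovk} goes through as stated. The paper circumvents exactly this second-moment difficulty by a different route: it introduces the auxiliary chain $K_\lambda(x,y)=(1-\lambda)P(x,y)+\lambda\pi(y)$, proves that for $\lambda\le 1/\rel$ the effective resistances $\cR_\lambda$ and $\cR_0$ are comparable up to a universal constant (Theorem~\ref{thm:effective resistance comparison}, built from the hitting-time/local-time lemmas in Section~\ref{sec:comparison}), feeds this through Sudakov--Fernique and the Ding--Lee--Peres GFF characterization to get $\tcov\asymp\tcov(K_\lambda)\asymp\tcov^\lambda$, and then compares $\taucov(k)$ to $\taucov^\lambda$ by an excursion decomposition of the $P^\lambda$-chain, finishing with a single-chain Paley--Zygmund estimate on $\taucov^\lambda$ rather than a two-point second moment for the $k$-walk process. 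If you want a direct $k$-walk argument you would need to supply a genuine two-point hitting estimate under only a $\rel$ hypothesis, which is the hard part and is exactly what the paper's resistance-comparison machinery replaces.
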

The example from Figure 1 in \cite{EfremenkoReingold}, namely two cliques of size $n$ connected by a single edge, shows that in general $\estart{\taucov(k)}{\pi^{\otimes k}}
\ge c \frac{t_{\mathrm{cov}}}{k}$ may fail for $k >  t_{\mathrm{cov}}/\rel$. Indeed, in this example  $\tcov=\Theta(n^2)=\rel$ and one can show that  $\estart{\taucov(k)}{\pi^{\otimes k}}
\leq  c \frac{n \log n }{k}+n^2 e^{-ck}$.

It would be interesting to establish analogous results for non-reversible chains. 
\begin{question}\rm{
	Do the assertions of Theorems~\ref{thm:indepone} and~\ref{thm: cover time lower bound} hold without the assumption of reversibility, where $\trel=1/\gamma$ and $\gamma$ is defined as in Chatterjee~\cite{Chatterjee}?}
\end{question}
\begin{question}\rm{
		Does the assertion of Proposition~\ref{lem:loweboundontcovk} hold without the assumption of reversibility, with $\tmix$ replaced with $\min\{\tmix(1/k),\tsep \}$, where $\tsep:=\min\{t \ge 0:\min_{x,y}P_L^t(x,y)/\pi(y) \ge 3/4 \}$?}
\end{question}

We note that in a recent work by Rivera et al.\ \cite{Thomas}, it is shown that there exists a $c>1/26$ such that  $\estart{\taucov(k)}{\pi^{\otimes k}}\geq
e^{-10}c^k \estart{\taucov}{\pi}$ for any $k$. See \cite{Thomas} for additional results concerning $\estart{\taucov(k)}{\pi^{\otimes k}}$ and $\max_{x_1,\ldots,x_k}\estart{\taucov(k)}{x_1,\ldots,x_k} $. We note that one can generalise Corollary \ref{cor:worststartingstate} by replacing the term $\mix$ on the r.h.s.\ of the display from the corollary with the notion of ``mixing time of $\ell<k$ walks out of $k$" introduced in \cite{Thomas} (which is smaller than $\mix$), and replacing the term $\frac{\tcov}{k}$ by $\frac{\tcov}{\ell}$. One can then take a minimum over $1 \le \ell \le k$.

Theorem \ref{thm: cover time lower bound} shows that Theorem \ref{thm:indepone} is sharp up to a constant factor for $k \le c \tcov/\trel$. We now present two conjectures aimed at describing the regime $k > c \tcov/\trel$.
	\begin{conjecture}
		\label{conj: monotonicity in k} There exists an absolute constant $C$ (independent of $P$) such that for all $k'<k$ we have that  
		\begin{align}\label{conj}
			\estart{\taucov(k)}{\pi^{\otimes k}}    \leq  C\cdot \left\lceil\frac{k'}{k}\estart{\taucov(k')}{\pi^{\otimes k'}}
			\right\rceil.
			\end{align}
	\end{conjecture}
	The conjecture can be restated as saying that the expected combined lengths of the $k$ stationary walks until the cover time, denoted by $f(k)$, satisfies that $f(k) \le Cf(k')$ for all $k' \le k$.
	 The following conjecture is analogous to the previous one, but instead of fixing the number of walks to be $k$ and the random quantity considered to be their length, it concerns the case that the length of the walks $M+1$ is fixed, and the random quantity is the number of  stationary walks of length $M+1$ required to cover the state space. 

	For $p \in [0,1]$ let $K_p=p \Pi + (1-p)P$, where $\Pi$ is the matrix whose rows are all equal to $\pi$. Then for all $x,y$ we have 
	\begin{align}\label{eq:defkp}
	K_p(x,y) = p\pi(y) +(1-p)P(x,y).
	\end{align}
	Denote the worst-case expected cover time for $K_p$ by $\tcov(K_p)$. Let $(X^{(i)}:i \ge 1)$ be a collection of independent realizations of the Markov chain with transition matrix $P$, started from the stationary distribution. For $M \in \mathbb{Z}_+$ let $\tau_M:=\inf\{k: V=\cup_{i=1}^k \{X_j^{(i)}:0 \le j \le M \} \}$ be the number of independent stationary random walks of length $M+1$ (counting time 0 as part of the length) required to cover~$V$. 
		\begin{conjecture}
		\label{conj: monotonicity in length} There exist absolute constants $c,C$ (independent of $P$) such that for all $0 \le M'<M$ and all $k \ge 1$ we have that  
		\begin{align}\label{conj2}
		(M'+1)\mathbb{E}[\tau_{M'}]   \leq  C	(M+1)\mathbb{E}[\tau_{M}]
		.
		\end{align}
				\begin{align}\label{conj3}
		c \tcov(K_{1/(M+1)}) \le	(M+1)\mathbb{E}[\tau_{M}]   \leq  C \tcov(K_{1/(M+1)})	.
		\end{align}
						\begin{align}\label{conj4}
		\mathbb{E}[\tau_{\lceil	\estart{\taucov(k)}{\pi^{\otimes k}}  \rceil}]   \leq  C k.
		\end{align}
	\end{conjecture}

The first inequality in the conjecture has a similar interpretation as \eqref{conj}. The second display suggests that the expectation of  $\tau_M$ would change by at most a constant factor if we modified the definition of $\tau_M$ such that instead of each walk performing a walk of length $M+1$, each walk would instead be of a random length, distributed as the Geometric distribution with parameter $1/(M+1)$. One can interpret \eqref{conj4} as asserting that (up to a constant factor) fixing the lengths of the stationary walks gives a larger speed up than fixing their number.

We believe that the methods developed to prove Theorem \ref{thm:indepone} can be useful in order to prove that there exist absolute constants $c_1,c_2,C_1,C_2>0$ (independent of $P$) such that for all $M \ge 0$
\begin{align}\label{conj3'}
	c_1 \tcov(K_{\frac{1}{c_2(M+1)}}) \le	(M+1)\mathbb{E}[\tau_{M}]   \leq  C_1 \tcov(K_{\frac{1}{C_2(M+1)}}).
\end{align}
Combined with the following bounds that we sketch afterwards, the above would establish~\eqref{conj3}.  One would then be able to derive \eqref{conj2} by combining \eqref{conj3} and \eqref{stability}.  There exists an absolute constant $C'>0$ such that for all $0 \le M' \le M$,
\begin{align}\label{stability}
	\frac{M'+1}{M+1} \tcov(K_{1/(M+1)}) \le	\tcov(K_{1/(M'+1)}) \le C' \tcov(K_{1/(M+1)}).
\end{align}

The second inequality in \eqref{stability} can be derived from a comparison of effective resistances, similar to the one in the proof of Theorem \ref{thm:indepone} (more precisely, the cases $M' \ge 1$ and $M'<1$ need to be treated separately; the case $M' \ge 1$ is very similar to the analysis from the proof of Theorem \ref{thm:indepone}), while the case $M'<1$ requires a different argument).  It is not hard to show that $p \mapsto p\tcov(K_{p})$ is continuous and non-decreasing in $p$, as by Wald's equation this is the expected number of independent stationary random walks of length Geometric$(p)$ required to cover $V$. This implies the first inequality in \eqref{stability}.

It is plausible that the ideas from the proof of Theorem \ref{thm:indepone} can be helpful in order to show that there exist absolute constants $c,c' \in (0,1) ,C,C' \ge  1$ (independent of $P$) such that for all $k$
\begin{equation}
	\label{e:conj5}
	c \tcov(K_{p_1(k)}) \le  k \lceil	\estart{\taucov(k)}{\pi^{\otimes k}}  \rceil \le C \tcov(K_{p_2(k)}),
\end{equation}
where $p_1(k)$ and $p_2(k)$ are given by the equations $p_1(k)\tcov(K_{p_1(k)})=C'k$  and \\ $p_2(k)\tcov(K_{p_2(k)})=c'k$. Since $p \tcov(K_p)$ is continuous and non-decreasing in $p$ such solutions exist, provided that $C'k \le \tcov(\Pi)$, which holds whenever $C'k \le (1-o(1))|V| \log |V|$ by a classic result of Feige~\cite{Feige} that in the reversible setup the expected cover time is always at least $(1-o(1))|V| \log |V|$. We expect that $\lceil	\estart{\taucov(k)}{\pi^{\otimes k}}  \rceil = O(1)$ whenever $k=\Omega(\tcov(\Pi))$ (the implicit constant in the $O(1)$ depends on the one from the $\Omega(\tcov(\Pi))$).  

The first inequality in \eqref{e:conj5} should probably hold with $p_1(k)$ above replaced with $\left( \lceil C'	\estart{\taucov(k)}{\pi^{\otimes k}}  \rceil \right)^{-1}$. This would imply \eqref{conj4} when combined with  \eqref{conj3'} and \eqref{stability}. Unfortunately, one cannot derive \eqref{conj} by combining \eqref{e:conj5} and \eqref{stability} because it is possible that $p_2(k)$ is of smaller order than $p_1(k)$. For instance, it is not hard to verify that if $P$ is the transition matrix of a simple random walk on the graph obtained by connecting two cliques of size $n$ by a single edge, then $|p\tcov(K_p)-2|=o(1)$ whenever $(1+o(1))n \log n \le 1/p =o(n^2)$. In this case, for $k=2$ the leftmost term in \eqref{e:conj5} is of order at most $n \log n$ whereas the rightmost term is of order at least $n^2$.

\emph{Notation} For functions $f$ and $g$ we write $f(n) \lesssim g(n)$ if there exists a constant $c > 0$ such that $f(n) \leq c g(n)$ for all $n$.  We write $f(n) \gtrsim g(n)$ if $g(n) \lesssim f(n)$.  Finally, we write $f(n) \asymp g(n)$ if both $f(n) \lesssim g(n)$ and $f(n) \gtrsim g(n)$.

\textbf{Acknowledgements:} We thank Nicol\'as Rivera, Thomas Sauerwald and John Sylvester for useful discussions and for feedback on an earlier draft of this work.

\subsection{Overview}
\label{s:overview}

In this section we first give an outline of the argument used to prove Theorem~\ref{thm:indepone} and then a brief discussion on the proofs of Corollary \ref{cor:worststartingstate} and Theorem \ref{thm: cover time lower bound}.

Our proof of Theorem~\ref{thm:indepone} builds upon the groundbreaking work of Ding, Lee and Peres~\cite{DingLeePeres}, where they established that the expected cover time of a random walk on a weighted graph is up to universal constants equal to the total conductance of the graph multiplied by the square of the expected maximum of the discrete Gaussian free field on the graph. Moreover, they prove that the expected cover time is comparable to the strong $\delta$-blanket time, which is the first time $t$ such that the chain visits every state at least $\delta t \pi(v)$ times and at most $\delta^{-1}t\pi(v)$ times by time $t$. This result is crucial for our analysis as we explain below.

In order to prove Theorem~\ref{thm:indepone} we employ the following strategy:
\begin{itemize}
\item[(i)]
Construct an auxiliary Markov chain whose expected cover time is at most of order $k\estart{\taucov(k)}{\pi^{\otimes k}}$ up to some universal constants.
\item[(ii)]
Prove that the expected cover time of the auxiliary chain is at most $C\tcov$ for some absolute positive constant $C$, where $\tcov$ is the expected cover time of the original chain. 
\end{itemize}

We actually construct a whole family of auxiliary Markov chains indexed by a parameter $\lambda\in (0,1)$. We do so by adding a new state $\partial$ to the state space $V$ and modify the transition matrix by allowing jumps to $\partial$ with probability $\lambda$ and with probability $1-\lambda$ the Markov chain uses the original matrix $P$. When at $\partial$ the chain jumps to $x\in V$ with probability $\pi(x)$.

We now explain how we address (ii). Using the equivalence from~\cite{DingLeePeres} between the cover time and the maximum of the GFF, the problem reduces to a comparison between the expected maxima of two Gaussian processes. One obstacle in comparing them though is that the two GFF's are defined on different state spaces due to the addition of $\partial$. However, this turns out to not be a major obstacle. One solution is to consider $\lambda _{0}\asymp 1/\tcov$ and show that for such a choice the cover time of the auxiliary chain is comparable to that of the original chain. One can then compare the effective resistances of the auxiliary chain for smaller values of $\lambda$ to those of the auxiliary chain with parameter~$\lambda_{0}$. Applying then the Sudakov-Fernique inequality translates a comparison between effective resistances into a comparison between the expectations of the maxima of the two GFF's, and hence this yields a comparison of expected cover times.

We now discuss (i). Naturally, when considering  $\estart{\taucov(k)}{\pi^{\otimes k}}$ it is natural to take $\lambda$ such that $k/\la \asymp \tcov$, since the first $k$ excursions from $\partial$ to itself take roughly $k/\lambda$ time units for the chain to complete. To bound   $\estart{\taucov(k)}{\pi^{\otimes k}}$ from above by $C \tcov/k$ it does not suffice to argue  that for such choice of $\lambda$ the union of the first $k$ excursions covers the state space.  The difficulty  is that while the lengths of the excursions have mean $1/\lambda$, these lengths  are random, and some of them will typically be unusually long. Namely, out of the first $k$ excursions, the longest excursion will typically be of length $\asymp(\log k) /\lambda$.

To address this issue we show that even if we truncate excursions of length above $C_{0}/\lambda$ so that their new length is  $C_{0}/\lambda$,   for some large absolute constant $C_0>0$,  the union of the first $k$ (possibly truncated) excursions covers the graph with  probability bounded away from zero. The idea is roughly as follows: If we set $\lambda$ such that  $k/\lambda=C_{1} \tcov$ for a large constant $C_{1}$ then from the aforementioned blanket time result we have that with probability close to $1$ by taking $C_{1}$ large enough in the first $k$ excursions every state $v$ is visited at least $0.99\pi(v)k/\lambda$ times. Here we use the choice   $k/\lambda=C_{1} \tcov$ together with (ii) above and we also need to take $C_1$ to be large compared with $C$ from (ii).

To conclude the proof it would now suffice to show that during the union of the parts of the excursions after time $C_0/\lambda$ (here time is measured w.r.t.\ each excursion)  there is no vertex $v$ which is visited at least  $0.98\pi(v)k/\lambda$ times. Crucially, only a relatively few excursions (roughly $e^{-C_0}k(1 \pm o(1))$ of them) would  be of length at least $C_0/\lambda$. Using the memoryless property of the Geometric distribution, as well as the stationarity of the excursions, we can apply the upper bound on the local time at a state from the definition of the blanket time to the union of the parts of the excursions after time  $C_0/\lambda$. To make this argument rigorous we cannot directly work with the blanket time, since its definition lacks monotonicity. To overcome this issue, in Section~\ref{sec:modifiedchain} we state and prove some results on local times whose proofs follow similarly to~\cite{DingLeePeres}. This concludes the discussion of the proof of Theorem \ref{thm:indepone}.

We now discuss Corollary \ref{cor:worststartingstate}. First we may assume that the original chain is lazy as this can only increase $\max_{x_1,\ldots,x_k}\estart{\taucov(k)}{x_1,\ldots,x_k}$, and this increases $\tcov$ by exactly a factor 2.  The idea is to let the $k$ chains first evolve for $C \mix$ time units, where $\mix$ is as above the mixing time of the lazy chain. Using standard results on reversible chains we argue that after this initial burn in period we may assume that at least $k/2$ of the walks have i.i.d.\ stationary locations at time  $C \mix$. 
The proof can then be concluded using Theorem \ref{thm:indepone}.

We conclude this overview with a brief discussion on the proof of Theorem \ref{thm: cover time lower bound}. By the above discussion, the main ingredient in the proof is to show that if $\lambda_0$ is as above, and $\lambda_0 \le \lambda \le 1/\rel$ then the effective resistances for the auxiliary chain with parameter~$\lambda$ are pairwise comparable up to a universal constant to the corresponding effective resistances for the auxiliary chain with parameter $\lambda_0$. Instead of working with the auxiliary chains we defined above, for technical reasons we actually establish such a comparison of effective resistances between the original chain $P$ and the auxiliary chain $K_{\lambda}$ as defined in~\eqref{eq:defkp}.
We believe that the comparison of effective resistances that we establish is of interest in its own right.

\emph{Organisation} In Section~\ref{sec:prelim} we recall some background on the Gaussian free field (GFF) and the equivalence between the cover time and the maximum of the GFF as established in~\cite{DingLeePeres}. In Section~\ref{sec:modifiedchain} we introduce a family of auxiliary chains and prove bounds on their cover and local times. In Section~\ref{sec:proofformultiplewalks} we give the proofs of Theorem~\ref{thm:indepone}, Corollary~\ref{cor:worststartingstate} and Proposition~\ref{lem:loweboundontcovk}. Finally in Section~\ref{sec:proofoftrel} we prove Theorem~\ref{thm: cover time lower bound} by first establishing a comparison result, Theorem~\ref{thm:effective resistance comparison}, between effective resistances for another auxiliary family of Markov chains that we define there.

\section{Preliminaries}\label{sec:prelim}

We recall some results on the correspondence between the Gaussian free field and cover times obtained by Ding, Lee and Peres in~\cite{DingLeePeres}. 
In this section, $G=(V,E)$ will always be a finite (undirected) connected graph with weights~$w(e)$ assigned to the (undirected) edges. For a vertex $x$ we write $w(x) = \sum_{y\sim x} w(x,y)$. A continuous time weighted random walk $X$ on $G$ is the process that  stays at every vertex for an independent exponential random variable of parameter $1$ and then jumps to a neighbour with probability proportional to the weight of the connecting edge, i.e.
\[
P(x,y) = \frac{w(x,y)}{w(x)}.
\]
The Gaussian free field (GFF) $\eta$ on $G =(V,E)$ with boundary (sometimes also called ground state) $z\in V$ is the zero mean Gaussian process $(\eta_x)_{x\in V}$ indexed by the vertices of $G$ with $\eta_z=0$ and covariances given by 
\begin{equation}
\label{e:covforGFF}
{\rm{Cov}}(\eta_x,\eta_y) = \frac{\estart{L_{\tau_z}(y)}{x}}{w(y)}, \ \forall \ x,y,
\end{equation}
where $L_t(y)=\int_0^t \1(X_s=y)\,ds$ for every $t>0$ is the local time at $y$ and $\tau_z$ is the first hitting time of $z$. Equivalently, for all $x,y \in V$ the variance of $\eta_x-\eta_y$ is given by
\[
\E{(\eta_x - \eta_y)^2} = \reff(x,y),
\]
where $\reff(x,y)$ is the effective resistance between $x$ and $y$ in the weighted graph $G$ (see for instance~\cite[Lemma~2.1]{DingLeePeres}). 

We next state the isomorphism theorem that first appeared in~\cite{isomorphismthm} which will be used several times in the paper and was also used extensively in~\cite{DingLeePeres}.

\begin{theorem}[Generalised Second Ray-Knight Isomorphism theorem, \cite{isomorphismthm}]\label{thm:isomorphism}
Let $G=(V,E)$ be a finite connected graph with weights $(w(e))_{e\in E}$ assigned to the edges and let $z\in V$ be a distinguished vertex of $G$. 
        Let $X$ be a continuous time weighted random walk on $G$ starting from $z$. For every $t>0$ we set 
        \[
        \tau(t) = \inf\left\{s\geq 0: \frac{L_s(z)}{w(z)} \geq t\right\}.
        \]
        Let $\eta$ be a GFF on $G$ with boundary $z$ independent of $X$. Then 
        \[
        \left(\frac{L_{\tau(t)}(x)}{w(x)} + \frac{1}{2} \eta_x^2 \right)_{x\in V} \overset{\mathcal{L}}{=} \left(\frac{1}{2}(\eta_x + \sqrt{2t})^2\right)_{x\in V}.
        \]
\end{theorem}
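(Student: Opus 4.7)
The plan is to verify the claimed equality in distribution by matching the joint Laplace transforms of the two random vectors at an arbitrary nonnegative $\lambda = (\lambda_x)_{x\in V}$, and invoking uniqueness of Laplace transforms for nonnegative random vectors on the finite index set $V$.

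Independence of $\eta$ from $X$ factorizes the Laplace transform of the LHS as $\phi_X(\lambda)\cdot\phi_\eta(\lambda)$ with
\[
\phi_X(\lambda) = \estart{\exp\Bigl(-\sum_{x\in V} \lambda_x\, L_{\tau(t)}(x)/w(x)\Bigr)}{z}, \qquad \phi_\eta(\lambda) = \E{\exp\Bigl(-\tfrac12\sum_{x\in V}\lambda_x \eta_x^2\Bigr)}.
\]
I would compute $\phi_X(\lambda)$ by noting that on $[0,\tau(t)]$ the excursions of $X$ from $z$ form a Poisson point process on excursion space with intensity $t$ times the excursion measure, which is precisely why the choice of the inverse local-time $\tau(t)$ is decisive. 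Applying Campbell's exponential formula, together with Kac's moment formula to compute the Laplace functional of a single excursion in terms of the generator $\mathcal{L}$ of the chain killed on hitting $z$, then gives $\phi_X(\lambda) = \exp(-t\cdot h(\lambda))$ for an explicit $h(\lambda)$ involving $(\mathcal{L} - \mathrm{diag}(\lambda))^{-1}$. For $\phi_\eta(\lambda)$, the centered Gaussian vector $\eta$ restricted to $V\setminus\{z\}$ has covariance equal by \eqref{e:covforGFF} to the Green's function $G$ of the chain killed at $z$, so a standard Gaussian quadratic-form computation yields $\phi_\eta(\lambda) = \det(I + G\Lambda)^{-1/2}$, with $\Lambda = \mathrm{diag}(\lambda_x)_{x\ne z}$.

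For the RHS, I would substitute $\xi_x = \eta_x + \sqrt{2t}$, note that $\xi_z = \sqrt{2t}$ is deterministic, and complete the square in the resulting Gaussian integral. This produces an expression of the form $\det(I+G\Lambda)^{-1/2}\exp(-t\cdot \tilde h(\lambda))$, where $\tilde h(\lambda)$ is a quadratic form in the constant vector $\sqrt{2t}\cdot\mathbf{1}$ built from $\Lambda$ and $(I+G\Lambda)^{-1}$. After cancellation of the common determinant factor, the theorem reduces to the algebraic identity $h(\lambda)=\tilde h(\lambda)$, which is where all the probabilistic content is concentrated.

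The main obstacle is precisely this identification of $h$ with $\tilde h$. I would carry it out via the resolvent identity
\[
(I+G\Lambda)^{-1} = I - G_\Lambda\Lambda,
\]
where $G_\Lambda$ is the Green's function of the chain killed on hitting $z$ with additional killing at rate $\lambda_x$ at vertex $x$. Combining this with the representation of $h(\lambda)$ as a boundary functional of $G_\Lambda$ coming from the excursion-Poisson/Kac computation, the identity $h=\tilde h$ becomes a linear-algebraic consequence of the Dirichlet problem satisfied by $G_\Lambda$ with source at $z$. Once this equality is in hand, the Laplace transforms of the two sides agree and the equality in distribution follows.
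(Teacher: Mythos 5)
The paper does not prove this theorem: it is quoted verbatim from the reference~\cite{isomorphismthm} and used as a black box (the boundary vertex here being~$\partial$). There is therefore no ``paper's own proof'' to compare your attempt against.

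As a proof sketch of the cited result, your plan is sound and is one of the standard routes. The key structural observations are all correct: $L_{\tau(t)}(z)/w(z)=t$ is deterministic, so the $z$-coordinate is trivially matched on both sides; the excursions of $X$ from~$z$ on~$[0,\tau(t)]$, parametrised by inverse local time, form a Poisson point process with intensity $t$~times the It\^o excursion measure, so Campbell's formula gives $\phi_X(\lambda)=e^{-\lambda_z t}\exp(-t\,h(\lambda))$; the GFF restricted to $V\setminus\{z\}$ is centred Gaussian with covariance equal to the Green function $G$ of the chain killed at~$z$ (this is exactly~\eqref{e:covforGFF}); and the completed-square Gaussian computation on the right-hand side produces the common factor $\det(I+G\Lambda)^{-1/2}$ plus $\exp(-t\,\tilde h(\lambda))$ with $\tilde h(\lambda)=\mathbf 1^{\top}\Lambda(I+G\Lambda)^{-1}\mathbf 1+\lambda_z$. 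Your resolvent identity $(I+G\Lambda)^{-1}=I-G_\Lambda\Lambda$ is also correct, since $G_\Lambda=(G^{-1}+\Lambda)^{-1}$. Note, for calibration, that the original argument in~\cite{isomorphismthm} instead goes through Dynkin's isomorphism theorem and a moment-matching argument rather than matching Laplace transforms, so your route is genuinely different from the cited one, though both are well-established in the literature.

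The one real gap is that the crux of your plan --- the identification $h(\lambda)=\tilde h(\lambda)$ --- is asserted to be ``a linear-algebraic consequence of the Dirichlet problem'' without being carried out, and this is precisely where all the content lives. To close it you would need to compute the excursion-measure integral
$h(\lambda)=\int\bigl(1-e^{-\sum_x\lambda_x\ell(x)/w(x)}\bigr)\,dn$
explicitly, express it via Kac's formula as a boundary evaluation of the killed resolvent (something of the form $\sum_x \lambda_x\,G_\Lambda$-entries anchored at~$z$ together with the first-exit term), and then verify that this equals $\mathbf 1^{\top}\Lambda(I+G\Lambda)^{-1}\mathbf 1$. Until that algebra is done, what you have is a correct outline rather than a proof.
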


\begin{theorem}[\cite{DingLeePeres}, Theorem~1.9]\label{thm:dingleeperes}
There exist universal positive constants $c_1$ and $c_2$ so that the following holds.
        Let $G=(V,E)$ be a finite connected graph endowed with weights $(w(e))_{e\in E}$. Let $X$ be a discrete or continuous weighted walk on $G$ and let $\taucov$ be its cover time. Let $\eta$ be a GFF on~$G$ with boundary $z\in V$. Then 
        \[
       c_2\cdot \mathcal{C}(G) \left(\E{\max_x \eta_x}\right)^2 \leq  \max_{x} \estart{\taucov}{x} \leq c_1 \cdot \mathcal{C}(G) \left(\E{\max_x \eta_x}\right)^2,
        \]
        where $\mathcal{C}(G)=\sum_v w(v)$. 
\end{theorem}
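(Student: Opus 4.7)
The proof rests on the Generalised Second Ray--Knight Isomorphism of Theorem~\ref{thm:isomorphism}, which converts local-time statements for the walk into level-set statements for the squared GFF. Fix a boundary vertex $z$ and start the continuous-time walk from $z$; set $\tau(t) = \inf\{s : L_s(z)/w(z) \ge t\}$. The event $\{\taucov \le \tau(t)\}$ equals $\{L_{\tau(t)}(x)>0 \text{ for all } x\}$, which by the isomorphism has the same probability as $\{(\eta_x+\sqrt{2t})^2 > \eta_x^2 \text{ for all } x\} = \{\min_x \eta_x > -\sqrt{t/2}\}$. Combined with $\max_x(-\eta_x) \eqdist \max_x \eta_x$, this yields the master identity
\[ \prstart{\taucov \le \tau(t)}{z} = \P\!\left(\max_x \eta_x < \sqrt{t/2}\right). \]
A crucial auxiliary fact is that $\sigma_* := \max_x \sqrt{\reff(x,z)} \lesssim \E \max_x \eta_x$: indeed $\max_x \eta_x \ge \eta_{x^*}^+$ for the variance-maximising vertex $x^*$, whence $\E \max \eta \ge \sigma_*/\sqrt{2\pi}$.

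For the upper bound I would take $t = C(\E\max\eta)^2$ with $C$ a large absolute constant. Borell's Gaussian concentration inequality, together with $\sigma_* \lesssim \E \max \eta$, gives $\P(\max\eta \ge \sqrt{t/2}) \le 1/2$, so $\prstart{\taucov \le \tau(t)}{z} \ge 1/2$. Since $X_{\tau(t)} = z$ almost surely, optional stopping yields $\estart{\tau(t)}{z} = t\,\mathcal{C}(G)$. A strong-Markov restart at $\tau(t)$ then gives $\estart{\taucov}{z} \le \estart{\tau(t)}{z} + \tfrac12 \estart{\taucov}{z}$, hence $\estart{\taucov}{z} \lesssim \mathcal{C}(G)(\E\max\eta)^2$. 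For an arbitrary starting state $x$, the commute-time identity yields $\estart{\tau_z}{x} \le \mathcal{C}(G)\reff(x,z) \le \mathcal{C}(G)\sigma_*^2$, so $\estart{\taucov}{x} \le \estart{\tau_z}{x} + \estart{\taucov}{z} \lesssim \mathcal{C}(G)(\E\max\eta)^2$.

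For the lower bound, the naive approach --- picking $t = c(\E\max\eta)^2$ so that $\prstart{\taucov > \tau(t)}{z} \ge 1/2$ by Borell, and concluding $\estart{\taucov}{z} \gtrsim \estart{\tau(t)}{z}$ --- \emph{fails}, because the events $\{\taucov > \tau(t)\}$ and large values of $\tau(t)$ are negatively correlated (short excursions from $z$ both shrink $\tau(t)$ and leave the graph uncovered). That argument only yields $\estart{\taucov}{z} \ge \tfrac12 t\,w(z)$, short by a factor $\mathcal{C}(G)/w(z) = 1/\pi(z)$. To close the gap I would follow the strategy of Ding--Lee--Peres: build a hierarchical net decomposition of $V$ in the intrinsic metric $d(x,y) = \sqrt{\reff(x,y)}$, and run a generic-chaining argument on the walk's hitting times along these nets to prove
\[ \max_x \estart{\taucov}{x} \gtrsim \mathcal{C}(G)\,\gamma_2(V,d)^2, \]
where $\gamma_2$ is Talagrand's functional. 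Talagrand's two-sided bound for Gaussian processes, applied to $\eta$ (whose intrinsic metric is exactly $d$), then gives $\gamma_2(V,d) \asymp \E\max\eta$; combining yields the required lower bound.

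The main obstacle is the lower-bound chaining argument. The isomorphism alone only produces the pointwise-variance estimate $\tcov \gtrsim \mathcal{C}(G)\sigma_*^2$ (equivalent to the commute-time identity applied to the farthest vertex from $z$), and therefore misses the potentially logarithmic amplification from $\sigma_*$ to $\E\max\eta$ realised, e.g., by the star graph (where $\sigma_*^2 \asymp 1$ but $(\E\max\eta)^2 \asymp \log n$). Capturing this amplification requires controlling excursion times of the walk on \emph{every} scale of the effective-resistance metric simultaneously, which is the technical heart of \cite{DingLeePeres}.
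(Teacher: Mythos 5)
This statement is quoted directly from Ding--Lee--Peres (\cite{DingLeePeres}, Theorem~1.9); the paper does not prove it, only cites it, so there is no in-paper proof to compare against. On the merits of your sketch, the central step is flawed. You assert a ``master identity''
\[
\prstart{\taucov \le \tau(t)}{z} \;=\; \P\!\left(\max_x \eta_x < \sqrt{t/2}\right),
\]
obtained by equating $\{L_{\tau(t)}(x)>0 \ \forall x\}$ with $\{(\eta_x+\sqrt{2t})^2 > \eta_x^2 \ \forall x\}$. But the isomorphism of Theorem~\ref{thm:isomorphism} is a \emph{distributional} equality between the process $\bigl(L_{\tau(t)}(x)/w(x) + \tfrac12\eta_x^2\bigr)_x$ (with $\eta$ \emph{independent} of the walk) and $\bigl(\tfrac12(\eta_x + \sqrt{2t})^2\bigr)_x$. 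Knowing the law of a sum of two independent nonnegative fields does not determine the law of either summand, and in particular the event $\{L_{\tau(t)}(x) > 0 \ \forall x\}$ is not a measurable function of the summed process. Your derivation also silently identifies the GFF appearing inside the isomorphism on the left with the one on the right, which is not a coupling the theorem supplies (and no such pointwise coupling can exist, since $\tfrac12(\eta_x + \sqrt{2t})^2 - \tfrac12\eta_x^2$ can be negative while $L_{\tau(t)}(x)\ge 0$ always). So the upper-bound argument, which rests entirely on this identity plus Borell's inequality, does not go through as written.

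The upper bound can in fact be salvaged along similar lines, but the argument has to be event-based rather than identity-based: on $\{\taucov > \tau(t)\}$ pick an uncovered $x_0$, use that $\tfrac12 \eta_{x_0}^2 \le \tfrac12\max_x\eta_x^2$ with the \emph{independent} field $\eta$, intersect with $\{\max_x|\eta_x| \le \sqrt{2a}\}$, and then invoke the isomorphism in the form $\P\bigl(\min_x\tfrac12(\tilde\eta_x+\sqrt{2t})^2 \le a\bigr) \ge \P(\taucov > \tau(t))\,\P(\max_x|\eta_x|\le \sqrt{2a})$. Choosing $a$ at the median and $t \asymp (\E\max\eta)^2$ and applying Borell then gives $\prstart{\taucov > \tau(t)}{z} \le 1/2$, and the restart argument (which you do have correctly, including $\estart{\tau(t)}{z} = t\,\mathcal{C}(G)$ and the reduction to starting at $z$) completes the upper bound. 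For the lower bound you correctly identify that a pointwise-variance argument is insufficient and that the majorizing-measures / $\gamma_2$-chaining machinery of \cite{DingLeePeres} is required; that assessment is accurate and is indeed the technical core of their proof.
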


\begin{remark}
        \rm{
        We note that Alex Zhai in~\cite{Zhai} sharpened the above result by obtaining exponential concentration bounds for the cover time of general graphs in terms of the expectation of the maximum of the GFF. His bounds are sharp for sequences of graphs with $\max_{x,y}\estart{\tau_y}{x} = o(\tcov)$.
        }
\end{remark}

\begin{lemma}[\cite{DingLeePeres}, Theorem (MM) and Lemma~2.4] \label{lem:dingleeperes}
There exist positive constants $c$ and $u_0$ so that if $G=(V,E)$ is a finite connected graph with weights $(w(e))_{e\in E}$ on the edges and $\eta$ is a GFF on $G$ with boundary~$z\in V$, then for all $u\geq u_0$ we have 
\[
\sum_{x\in V} \exp\left( - u \cdot w(x) \cdot \left(\E{\max_v \eta_v}\right)^2   \right) \lesssim e^{-c u}.
\]
\end{lemma}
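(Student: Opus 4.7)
Write $M := \E{\max_v \eta_v}$ and $d(x,y) := \sqrt{\reff(x,y)}$ for the canonical $L^2$ metric of the Gaussian process $\eta$. The plan rests on two ingredients which combine via a simple layer-cake computation.

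The first ingredient is a pointwise lower bound $w(x) M^2 \ge 1/(2\pi)$ for every $x \in V$. For $x \ne z$, the parallel law at $x$ gives $\reff(x, z) \ge \reff(x, V \setminus \{x\}) = 1/w(x)$, so the covariance identity from the preliminaries yields $\Var(\eta_x) = \reff(x, z) \ge 1/w(x)$, and therefore $M \ge \E{\max(\eta_x, 0)} = \sqrt{\Var(\eta_x)/(2\pi)} \ge 1/\sqrt{2\pi w(x)}$. For the remaining case $x = z$, pick any $y \ne z$ and run the same argument but use the parallel law at $z$ instead to conclude $\reff(y, z) \ge 1/w(z)$, again giving $M^2 \ge 1/(2\pi w(z))$.

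The second ingredient is the level-set bound $|V_\alpha| \le e^{C\alpha}$, where $V_\alpha := \{x \in V : w(x) M^2 \le \alpha\}$ and $C$ is a universal constant. The key observation is that the parallel law at $x$ forces $d(x, y) \ge 1/\sqrt{w(x)}$ for every $y \ne x$, so any two distinct vertices in $V_\alpha$ lie at mutual $d$-distance at least $M/\sqrt{\alpha}$. Sudakov's minoration inequality for $\eta$ then gives $|V_\alpha| \le \exp\!\left(C' M^2 / (M/\sqrt{\alpha})^2\right) = e^{C \alpha}$.

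To conclude, rewrite the sum via a layer-cake identity and use that $V_\alpha = \emptyset$ for $\alpha < 1/(2\pi)$:
\[
\sum_{x \in V} e^{-u w(x) M^2} \;=\; u \int_0^\infty |V_\alpha| \, e^{-u\alpha} \, d\alpha \;\le\; u \int_{1/(2\pi)}^\infty e^{(C - u)\alpha} \, d\alpha \;=\; \frac{u}{u - C}\, e^{-(u - C)/(2\pi)}.
\]
For $u \ge u_0 := 2C$ the right-hand side is at most $2\, e^{-u/(4\pi)}$, giving the claim with $c = 1/(4\pi)$. The substantive step is the level-set bound in the second ingredient; it is essentially a packing estimate in the canonical $L^2$ metric of the GFF, whose crucial structural input is that the parallel law forces low-weight vertices to be well-separated in resistance distance from everything else, making Sudakov's inequality directly applicable.
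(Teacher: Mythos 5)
Your proof is correct and self-contained. Note that the paper does not give its own proof of this lemma; it simply cites it from Ding--Lee--Peres (their Theorem (MM) together with Lemma~2.4), so there is no internal proof to compare against. Your argument is the natural one and is in the same spirit as the DLP original: the parallel law gives $\reff(x,y)\ge \reff\bigl(x,V\setminus\{x\}\bigr)=1/w(x)$, which both forces $w(x)\,M^2\ge 1/(2\pi)$ (via $M\ge \E{\max(\eta_x,0)}=\sqrt{\Var(\eta_x)/(2\pi)}$, handling $x=z$ by applying the same estimate to any fixed $y\ne z$) and shows that the level set $V_\alpha=\{x: w(x)M^2\le\alpha\}$ is $M/\sqrt{\alpha}$-separated in the canonical metric $d(x,y)=\sqrt{\reff(x,y)}$. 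Sudakov minoration then yields $|V_\alpha|\le e^{C\alpha}$, and the layer-cake identity
\[
\sum_{x\in V} e^{-u\,w(x)M^2}=u\int_0^\infty |V_\alpha|\,e^{-u\alpha}\,d\alpha
\]
together with $V_\alpha=\varnothing$ for $\alpha<1/(2\pi)$ gives the stated decay for $u\ge u_0:=2C$. The only minor points worth flagging are that the Sudakov step is vacuous when $|V_\alpha|\le 1$ (so the bound $|V_\alpha|\le e^{C\alpha}$ holds trivially there), and that you should state explicitly that $\Var(\eta_x)=\reff(x,z)$ follows from $\eta_z=0$ and $\E{(\eta_x-\eta_y)^2}=\reff(x,y)$; both are immediate. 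Overall this is a clean and essentially optimal route to the estimate.
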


Finally we recall the Sudakov-Fernique inequality which gives a comparison for the maxima of two Gaussian processes given a condition on their respective variances.

\begin{proposition}{\rm{(\cite[Proposition~5.11]{Biskupnotes})}}\label{pro:sudakovfernique} 
Suppose that $X$ and $Y$ are centred Gaussian processes in $\R^n$ satisfying 
	\[
	\E{(X_i-X_j)^2}\leq \E{(Y_i-Y_j)^2} \ \forall \ i,j=1,\ldots,n.
	\]
	Then we have 
	\[
	\E{\max_{i\leq n} X_i}\leq \E{\max_{i\leq n}Y_i}.
	\]
\end{proposition}

\section{Cover time of an auxiliary Markov chain}
\label{sec:modifiedchain}

Let $P$ be an irreducible transition matrix on the finite state space $V$ and suppose it is reversible with respect to the invariant distribution $\pi$. In this section we introduce a family of auxiliary Markov chains indexed by $\lambda\in (0,1)$ by adding the state $\partial$ to the state space $V$ and modifying the transition matrix $P$ as follows: for every $\lambda\in (0,1)$ we let 
\[
{P}^\lambda(x,\partial) = \lambda, \quad {P}^\lambda(x,y) = (1-\lambda)P(x,y) \quad \text{ and } \quad {P}^\lambda(\partial, x) = \pi(x) \quad \forall \ x, y\in V.
\] 
Then it is immediate to check that $P^\lambda$ is reversible with respect to $\pi^\lambda$ given by 
\[
{\pi}^\lambda(x) = \frac{\pi(x)}{1+\lambda} \quad \text{for} \ x\neq \partial \quad  \text{ and }\quad  {\pi}^\lambda(\partial) = \frac{\lambda}{1+\lambda}.
\]
Since the matrix ${P}^\lambda$ is reversible, it corresponds to a random walk on $V\cup\{\partial\}$ with weights on the (undirected) edges given by
        \begin{align}\label{eq:defwlambda}
        w^\lambda(x,\partial) = \frac{\lambda}{1+\lambda} \cdot \pi(x) \quad \text{ and } \quad w^\lambda(x,y) = \frac{1-\lambda}{1+\lambda} \cdot P(x,y) \pi(x), \quad \forall \ x,y \in V
                \end{align}
        and we also write
        \[
        w^\lambda(x) = \sum_{y}w^\lambda(x,y) = \frac{\lambda}{1+\lambda}\cdot \pi(x)+ \frac{1-\lambda}{1+\lambda} \cdot \pi(x) = \frac{\pi(x)}{1+\lambda} \quad \text{ and } \quad w^\lambda(\partial) =\frac{\lambda}{1+\lambda}.
        \]
        So the total conductance of $V\cup \{\partial\}$ corresponding to $P^\lambda$ is equal to $\sum_x w^\lambda(x) =1$ and the stationary distribution of $P^{\la}$ is $w^{\la}$.
        
        We now consider the weighted graph $(V\cup \{\partial\}, (w^\lambda(x,y))_{(x,y)})$ and write $\reff^\lambda(x,y)$ for the effective resistance between $x$ and $y$. 
        Let $\eta^\lambda$ be a GFF on $V\cup\{\partial\}$ with $\eta^\lambda_\partial=0$ and 
        \[
        \E{(\eta_x^\lambda - \eta_y^\lambda)^2} = \reff^\lambda(x,y) \quad \text{ for all } \ x,y\in V\cup\{\partial\}.
        \]
        Let $X$ be a discrete time Markov chain on $V\cup\{\partial\}$ with matrix $P^\lambda$. We define for all $x\in V\cup\{\partial\}$ the hitting time
        \[
        \tau_x^\lambda =\min\{t\geq 0: X_t=x\}
        \]
        and the cover time 
        \[
        \taucov^\lambda = \max_x \tau_x^\lambda.
        \]
        We finally write $\tcov^\lambda = \max_x \estart{\taucov^\lambda}{x}$.

        \begin{lemma}\label{cl:monotonicity} 
Let $P$ be a finite, irreducible and reversible transition matrix. If $0<\lambda<\mu \le1/2$, then 
        \[
        \E{\max_v \eta_v^\mu}\leq 3\E{\max_v \eta_v^\lambda}.
        \] 
Moreover, $\tcov^\mu \lesssim \tcov^\lambda$.
\end{lemma}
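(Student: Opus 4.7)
The plan is to apply Sudakov--Fernique after comparing the two networks edge-by-edge using Rayleigh monotonicity, and then feed the result into the Ding--Lee--Peres equivalence (Theorem~\ref{thm:dingleeperes}) to deduce the cover time statement.

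First I would compare individual edge resistances on $V\cup\{\partial\}$. From the formulas \eqref{eq:defwlambda}, the resistance of an edge $\{x,y\}\subset V$ is $r^\lambda(x,y)=(1+\lambda)/((1-\lambda)P(x,y)\pi(x))$ and the resistance of an edge to $\partial$ is $r^\lambda(x,\partial)=(1+\lambda)/(\lambda\pi(x))$. A direct computation gives
\[
\frac{r^\mu(x,y)}{r^\lambda(x,y)}=\frac{(1+\mu)(1-\lambda)}{(1+\lambda)(1-\mu)}\le\frac{1+\mu}{1-\mu}\le 3,\qquad\frac{r^\mu(x,\partial)}{r^\lambda(x,\partial)}=\frac{(1+\mu)\lambda}{(1+\lambda)\mu}\le\frac{1+\mu}{1+\lambda}\le\tfrac{3}{2},
\]
where we used $\mu\le 1/2$ for the first bound and $\mu>\lambda$ for the second. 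Hence every edge resistance grows by at most a factor $3$ when $\lambda$ is replaced by $\mu$, and by Rayleigh's monotonicity principle the same holds for effective resistances, i.e.\ $\reff^\mu(x,y)\le 3\reff^\lambda(x,y)$ for every pair $x,y\in V\cup\{\partial\}$.

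Next I would apply the Sudakov--Fernique inequality to the two centered Gaussian processes $\eta^\mu$ and $\sqrt{3}\,\eta^\lambda$, both indexed by $V\cup\{\partial\}$ and both vanishing at $\partial$. Since
\[
\E{(\eta_x^\mu-\eta_y^\mu)^2}=\reff^\mu(x,y)\le 3\reff^\lambda(x,y)=\E{\bigl(\sqrt 3\eta_x^\lambda-\sqrt 3\eta_y^\lambda\bigr)^2},
\]
Sudakov--Fernique yields $\E{\max_v\eta_v^\mu}\le\sqrt 3\,\E{\max_v\eta_v^\lambda}\le 3\,\E{\max_v\eta_v^\lambda}$, which is the first claim.

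For the \emph{moreover} part I would invoke Theorem~\ref{thm:dingleeperes} on each of the two weighted graphs $(V\cup\{\partial\},w^\lambda)$ and $(V\cup\{\partial\},w^\mu)$. By the computation recorded before the statement of the lemma the total conductance equals $1$ for every value of the parameter, so
\[
\tcov^\mu\asymp\bigl(\E{\max_v\eta_v^\mu}\bigr)^2\lesssim\bigl(\E{\max_v\eta_v^\lambda}\bigr)^2\asymp\tcov^\lambda,
\]
which gives $\tcov^\mu\lesssim\tcov^\lambda$. The main obstacle to being careful about is the comparison of the $\partial$-edges, where the ratio $\lambda/\mu$ could in principle blow up in the wrong direction; the key observation is that because we are \emph{increasing} $\lambda$ to $\mu$, the conductance to $\partial$ increases and so the corresponding edge resistance actually decreases, making this the easy case rather than a pitfall.
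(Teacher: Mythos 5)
Your proof is correct and follows essentially the same route as the paper: an edge-by-edge comparison of the weights, Thomson/Rayleigh to pass to effective resistances, Sudakov--Fernique for the maxima of the GFFs, and Ding--Lee--Peres for the cover times. The only cosmetic difference is that you phrase the comparison in terms of edge resistances and Rayleigh monotonicity where the paper phrases it in terms of conductances and Thomson's principle, and you record the slightly sharper $3/2$ bound on the $\partial$-edges; both arrive at the same factor $3$ for effective resistances.
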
     
        
        \begin{proof}[\bf Proof]
By the Sudakov-Fernique inequality, Proposition~\ref{pro:sudakovfernique}, by taking the ground state to be $\partial$ in both networks it suffices to prove that for all $u$ and $v$
        \begin{align*}    
        \E{(\eta_v^\mu - \eta_u^\mu)^2} \leq 3 \E{(\eta_v^\lambda - \eta_u^\lambda)^2},
        \end{align*}
        which is equivalent to proving that for all $u$ and $v$
        \begin{align}\label{eq:comparingvar}
                \reff^\mu(u,v) \leq 3\reff^\lambda(u,v),
        \end{align}
        where $\reff^\lambda(x,y)$ stands for the effective resistance between $(x,y)$ in the network $(V\cup\{\partial\}, (w^\lambda(x,y))_{(x,y)})$. 
        Now notice that by the definition of the weights $w^\lambda$ in~\eqref{eq:defwlambda} it follows that for all $x$
        \[
        w^\lambda(x,\partial) < w^\mu(x,\partial)
        \]
        and for all $x,y\neq \partial$
        \[
        w^\lambda(x,y) \leq 3 w^\mu(x,y),
        \]
        since for $\mu\in (0,1/2]$ it holds that $(1-\mu)/(1+\mu) \ge 1/3$. Therefore using Thomson's principle we obtain for all $u,v$
        \[
        R^\lambda_{\rm{eff}}(u,v) \geq \frac{1}{3} R^\mu_{\rm{eff}}(u,v),
        \]
        which proves~\eqref{eq:comparingvar} and concludes the proof of the first statement of the claim. 
        
        For the second statement, by Theorem~\ref{thm:dingleeperes} (and $\sum_x w^{\la}(x)=1$) we get
        \begin{align}\label{eq:tcovlambdagff}
        \tcov^\lambda\asymp \left(\E{\max_x \eta_x^\lambda} \right)^2.
        \end{align}
        This combined with the first assertion of the lemma completes the proof.
    \end{proof}

        \begin{claim}\label{cl:lambdaequal}
        There exists a universal constant $C$ so that the following holds. Let $P$ be a finite irreducible and reversible transition matrix. Suppose that $\lambda$ satisfies $1/\lambda \geq 100\tcov$. Then 
        \[
        \tcov^\lambda \leq  \frac{C}{\lambda},
        \]
         where $\tcov$ corresponds to the Markov chain with matrix $P$.
\end{claim}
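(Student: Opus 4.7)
The plan is to decompose the chain $X$ with matrix $P^{\lambda}$ into i.i.d.\ excursions from $\partial$. Let $T_{0}=0<T_{1}<T_{2}<\cdots$ denote the successive return times of $X$ to $\partial$. Starting from $\partial$, by the strong Markov property the pieces $(X_{T_{i-1}},\ldots,X_{T_{i}-1})$ are i.i.d., and each excursion satisfies $\estart{T_{1}}{\partial}=1+1/\lambda\asymp 1/\lambda$, since the number of steps spent in $V$ within a single excursion is a geometric random variable with parameter $\lambda$.

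First, I would show that a single excursion covers $V$ with probability bounded below by a universal constant $c>0$. The key observation is that, conditional on the excursion length being $T_{1}=G+1$, the successive visits to $V$ within that excursion are distributed as a $P$-chain of length $G$ started from $\pi$, and moreover this trajectory is \emph{independent} of $G$. Since $\lambda\tcov\le 1/100$, a direct computation gives $\pr{G\ge 10\tcov}=(1-\lambda)^{10\tcov-1}\ge 1-10\tcov\lambda\ge 9/10$, while Markov's inequality combined with $\estart{\taucov}{\pi}\le\tcov$ yields $\prstart{\taucov\le 10\tcov}{\pi}\ge 9/10$. By independence, a single excursion covers $V$ with probability at least $c:=(9/10)^{2}$.

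Next, let $N$ be the smallest index such that the first $N$ excursions together cover $V$. Since the excursions are i.i.d., $N$ is stochastically dominated by a geometric random variable with parameter $c$, so $\E{N}\le 1/c$. Crucially, the length $T_{i}-T_{i-1}$ of the $i$-th excursion is independent of the event $\{N\ge i\}$, which is measurable with respect to the first $i-1$ excursions. Exchanging sum and expectation therefore yields
\[
\estart{T_{N}}{\partial}=\sum_{i\ge 1}\pr{N\ge i}\cdot\estart{T_{1}}{\partial}=\E{N}\cdot(1+1/\lambda)\lesssim 1/\lambda.
\]
Since $\taucov^{\lambda}\le T_{N}$ when $X_{0}=\partial$, this gives $\estart{\taucov^{\lambda}}{\partial}\lesssim 1/\lambda$. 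For an arbitrary starting state $x\in V$, the strong Markov property together with $\estart{\tau_{\partial}^{\lambda}}{x}=1/\lambda$ (as $\tau_{\partial}^{\lambda}$ is geometric with parameter $\lambda$ from any $x\in V$) yields $\estart{\taucov^{\lambda}}{x}\lesssim 1/\lambda$, which completes the proof. I do not anticipate any real obstacle: the only delicate point is the independence used in the displayed equation, which hinges on the fact that each excursion (both its length and its trajectory) is independent of the preceding ones.
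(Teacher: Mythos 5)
Your proposal is correct and follows essentially the same approach as the paper: bound from below the probability that a single $P^\lambda$-excursion from $\partial$ covers $V$ (using independence of the excursion trajectory from the geometric length $G$, Markov's inequality for $\prstart{\taucov < 10\tcov}{\pi}$, and the assumption $\lambda\tcov \le 1/100$), and then iterate over excursions. The only cosmetic difference is the bookkeeping at the end: you invoke Wald's identity for $T_N$, whereas the paper more tersely combines $\prstart{\taucov^\lambda \le 10\tcov}{\partial} \ge 4/5$ with $\estart{\tau^\lambda_\partial}{x}=1/\lambda$ and the Markov property; these amount to the same geometric-trials argument.
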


\begin{proof}[\bf Proof]

 Note that by definition, the chain with matrix ${P}^\lambda$ can be realised by letting a chain move on $V$ according to $P$ and then jump to $\partial$ after an independent geometric time of parameter~$\lambda$. After visiting $\partial$ it jumps to a state according to $\pi$ and then continues in $V$ according to $P$ until an independent geometric time again when it jumps to $\partial$ and so on.

We now consider excursions of this chain from $\partial$ and we will prove that with positive probability the walk covers the graph during one such excursion. Let $\Gamma$ be a geometric random variable of parameter $\lambda$ independent of $\taucov$. 
Recalling that $\taucov$ stands for the cover time of a chain on $V$ with transition matrix $P$ and using that $1/\lambda \geq 100\tcov$ we have 
\begin{align*}
        \prstart{\taucov^\lambda \leq 10\tcov}{\partial} \geq \prstart{\Gamma>10\tcov, \taucov < 10\tcov}{\pi}&\geq \pr{\Gamma>10\tcov} - \prstart{\taucov\geq 10\tcov}{\pi} \\ 
        &\geq 1- \frac{1}{10}- \frac{1}{10}=\frac{4}{5},
        \end{align*}
where for the third inequality we used Markov's inequality. Since  $\estart{\tau_\partial^\lambda}{x} = 1/\lambda$ for every $x\in V$, by the Markov property and the above it follows that there exists a positive constant $C$ so that 
\[
\tcov^\lambda\leq C\left( \tcov + \frac{1}{\lambda}\right)
\]
and this concludes the proof by the assumption on $\lambda$.
\end{proof}

We end this section with the following lemma which gives tail bound estimates on the local time of the auxiliary Markov chain. The proof is very similar to the proofs of Theorems~2.5 and~2.6 of~\cite{DingLeePeres}.

\begin{lemma}\label{lem:localtimes}
	There exist two positive constants $c$ and $C$ so that 
	for all $\delta\in (0,1/8)$, all~$\beta$ and~$A$ satisfying $A\wedge \beta\geq C$
	the following holds. Let $P$ be an irreducible and reversible with respect to $\pi$ transition matrix on the finite set $V$. 
	Let $\lambda\in (0,1)$ and $\eta^\lambda$ be a GFF on $(V\cup\{\partial\}, (w^\lambda(x,y))_{(x,y)})$ with boundary $\partial$.
	Let $t\geq \beta \left(\E{\max_x \eta^\lambda_x}\right)^2$ and
	let $Z$ have transition matrix~$P^\lambda$ and start from~$\partial$. For each $\ell \in \N$ and $v\in V$ let $N_\ell(v)$ be the number of visits to $v$ at the first time that $Z$ has completed $\ell+1$ visits to~$\partial$. Then we have
	\[
	\pr{\max_{x\in V} \frac{N_{\lfloor \lambda t\rfloor}(x)}{w^\lambda(x)} \geq A t} \lesssim e^{-c \beta} + e^{-c\lfloor\lambda t\rfloor}\quad \text{ and } \quad \pr{\min_{x\in V} \frac{N_{\lceil \lambda t\rceil}(x)}{w^\lambda(x)} \leq \delta t} \lesssim e^{-c \delta\beta} + e^{-c\lceil\lambda t\rceil}.
	\]
\end{lemma}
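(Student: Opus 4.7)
The proof follows the strategy of Theorems 2.5 and 2.6 of \cite{DingLeePeres}, combining the Ray--Knight isomorphism (Theorem~\ref{thm:isomorphism}) with Gaussian concentration for the GFF. First, I would pass to the continuous-time realisation $\widetilde Z$ of $Z$ obtained by attaching i.i.d.\ $\mathrm{Exp}(1)$ holding times at each site, so that the discrete visit count $N_\ell(v)$ equals the number of visits of $\widetilde Z$ to $v$ before the stopping time $T_{\ell+1}$ at which $\widetilde Z$ completes its $(\ell+1)$-th visit to $\partial$. Setting $L_u(v)=\int_0^u \1(\widetilde Z_r = v)\,dr$, the key structural fact is that conditional on $N_\ell(v)$, the continuous local time $L_{T_{\ell+1}}(v)$ is $\mathrm{Gamma}(N_\ell(v),1)$; in particular $L_{T_{\ell+1}}(\partial) \sim \mathrm{Gamma}(\ell+1,1)$ concentrates around $\ell+1$ with Chernoff tails, producing the $e^{-c\lfloor \lambda t\rfloor}$ (resp.\ $e^{-c\lceil \lambda t\rceil}$) terms in the conclusion.

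For the upper bound I would take $s = 4(\ell+1)/w^\lambda(\partial)$, which is $\asymp t$ since $\ell=\lfloor\lambda t\rfloor$ and $w^\lambda(\partial)=\lambda/(1+\lambda)$, and let $\tau(s)=\inf\{u: L_u(\partial)/w^\lambda(\partial)\geq s\}$. On the event $\{L_{T_{\ell+1}}(\partial)\leq 2(\ell+1)\}$ we have $T_{\ell+1}\leq \tau(s)$ and thus $L_{T_{\ell+1}}(v)\leq L_{\tau(s)}(v)$ for every $v$. A second Chernoff bound on Gamma variables gives
\begin{equation*}
\pr{N_\ell(v)\geq Atw^\lambda(v),\ L_{T_{\ell+1}}(v)< N_\ell(v)/2} \lesssim e^{-cAtw^\lambda(v)},
\end{equation*}
and summing these errors via Lemma~\ref{lem:dingleeperes} (with $u=cA\beta$ and $t\geq \beta(\E{\max_x \eta^\lambda_x})^2$) yields $\lesssim e^{-c'\beta}$ once $A\geq 100$. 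The residual probability $\pr{\max_v L_{\tau(s)}(v)/w^\lambda(v)\geq At/2}$ is controlled through the isomorphism: since $(\eta^\lambda_x)^2/2\geq 0$, Theorem~\ref{thm:isomorphism} gives $\max_x L_{\tau(s)}(x)/w^\lambda(x) \leq \max_x\bigl(L_{\tau(s)}(x)/w^\lambda(x)+(\eta^\lambda_x)^2/2\bigr) \stackrel{d}{=} \max_x(\eta^\lambda_x+\sqrt{2s})^2/2 \leq (\max_x|\eta^\lambda_x|+\sqrt{2s})^2/2$. Because $A\geq 100$ and $\sqrt{2s}\lesssim \sqrt t$, the event forces $\max_x|\eta^\lambda_x|\gtrsim \sqrt{At}$, and Borell--TIS (using $\E{\max_x|\eta^\lambda_x|}\lesssim \E{\max_x\eta^\lambda_x}\leq \sqrt{t/\beta}$ together with $\max_x\sqrt{\Var(\eta^\lambda_x)}\lesssim \E{\max_x\eta^\lambda_x}$, the latter a consequence of $\eta^\lambda_\partial=0$) bounds its probability by $\lesssim e^{-cA\beta}$.

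The lower bound proceeds analogously but via the reverse comparison $T_{\lceil\lambda t\rceil+1}\geq \tau(s)$. I would pick $s$ slightly smaller, so that $L_{T_{\lceil\lambda t\rceil+1}}(\partial)\geq sw^\lambda(\partial)$ with probability $1-e^{-c\lceil\lambda t\rceil}$, giving $L_{T_{\lceil\lambda t\rceil+1}}(v)\geq L_{\tau(s)}(v)$ for all $v$. Writing $\min_x L_{\tau(s)}(x)/w^\lambda(x) \geq \min_x \bigl(L_{\tau(s)}(x)/w^\lambda(x)+(\eta^\lambda_x)^2/2\bigr) - \max_x (\eta^\lambda_x)^2/2$ and using Theorem~\ref{thm:isomorphism} to identify the distribution of the first term with that of $\min_x(\eta^\lambda_x+\sqrt{2s})^2/2$, Borell--TIS control of $\max_x|\eta^\lambda_x|$ (with $\delta<1/8$ ensuring a positive gap $\sqrt{2s}-\sqrt{4\delta t+\text{slack}}\gtrsim \sqrt t$) forces $\min_v L_{\tau(s)}(v)/w^\lambda(v)\geq 2\delta t$ except on probability $\lesssim e^{-c\beta}$. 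The factor $\delta$ in the claimed exponent emerges in the final Chernoff step: on the good event $\{L_{T_{\lceil\lambda t\rceil+1}}(v)\geq 2\delta tw^\lambda(v)\}$, Chernoff on $\mathrm{Gamma}(N_{\lceil\lambda t\rceil}(v),1)$ gives
\begin{equation*}
\pr{L_{T_{\lceil\lambda t\rceil+1}}(v)\geq 2\delta tw^\lambda(v),\ N_{\lceil\lambda t\rceil}(v)<\delta tw^\lambda(v)} \lesssim e^{-c\delta tw^\lambda(v)},
\end{equation*}
and Lemma~\ref{lem:dingleeperes} with $u=c\delta\beta$ sums these into $\lesssim e^{-c'\delta\beta}$.

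The main obstacle is the constant bookkeeping across the Chernoff and Gaussian estimates. The parameter $s$ must be tuned so that $\tau(s)$ sandwiches $T_{\ell+1}$ on the correct side with the desired Chernoff error in $\ell$ (resp.\ $\lceil\lambda t\rceil$), while keeping $s\asymp t$ so the Ray--Knight deviation $\sqrt{2s}$ remains comparable to $\sqrt{At}$ in the upper bound and leaves enough margin for $\min L/w^\lambda \geq 2\delta t$ in the lower bound. The hypotheses $A>100$ and $\delta<1/8$ are what guarantee these gaps, and ``$\beta$ sufficiently large'' is needed both to apply Lemma~\ref{lem:dingleeperes} (its exponent must exceed the threshold $u_0$) and to dominate the mean term $\E{\max_x\eta^\lambda_x}$ in the Borell--TIS concentration.
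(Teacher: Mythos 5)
Your proposal is correct and follows essentially the same route as the paper: pass to the continuous-time version, use Chernoff/Gamma tail bounds to convert between discrete visit counts $N_\ell$ and continuous local times (producing the $e^{-c\lambda t}$-type errors), bound the ``local time large but visit count small'' (and vice versa) events by summing per-vertex Chernoff estimates via Lemma~\ref{lem:dingleeperes} (producing the $\beta$-dependent errors), and control the extreme local times at $\tau(s)$ by dropping the nonnegative $\tfrac12(\eta^\lambda_x)^2$ term in the Ray--Knight identity and applying Borell's inequality, with $\sigma\lesssim\E{\max_x\eta^\lambda_x}$ coming from $\eta^\lambda_\partial=0$. The only cosmetic differences are that the paper phrases the Gamma tails as large deviations for sums of i.i.d.\ exponentials, and it chooses the explicit comparison times $\til\tau(2t)$ (upper bound) and $\til\tau(t/2)$ (lower bound) rather than the $s\asymp t$ parametrisation you use; neither changes the argument.
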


\begin{proof}[\bf Proof]
	
	In order to use the isomorphism theorem we need to pass to continuous time. We do so, by letting $\til Z$ be a continuous time Markov chain on $V\cup\{\partial\}$ which stays at each vertex for an independent exponential random variable of parameter $1$ and at the $i$-th jump time it goes to $Z_i$. We let $L_t(v)$ be the local time at $v$ by time $t$ for $\til{Z}$, i.e.\
	\[
	L_t(v) = \int_0^t \1(\til{Z}_s =v)\,ds.
	\]
	Let $\til{\tau}(t)$ be defined as
	\[
	\til{\tau}(t) = \inf\{ s\geq 0: L_s(\partial) \geq \lambda t\}.
	\]
	Note that since $w^\lambda(\partial)=\lambda/(1+\lambda)$ we have $\til{\tau}(t) = \tau((1+\lambda)t)$ with $\tau(t)$ as defined in Theorem~\ref{thm:isomorphism} corresponding to $\til{Z}$, i.e.
	\[
	\tau(t) = \inf\left\{s\geq 0: \frac{L_s(\partial)}{w^\lambda(\partial)}\geq t \right\}.
	\] 
	For each $i\geq 1$ and $x\in V\cup\{\partial\}$ we let $E_i^x$ be the time spent at $x$ on the $i$-th visit to $x$ by $\til{Z}$. Then $(E_i^x)_{i\geq 1, x}$ are i.i.d.\ exponential random variables of parameter $1$. We write $\til{N}_t(x)$ for the total number of times $\til{Z}$ visits $x$ by time $t$, i.e.
	\[
	\til{N}_t(x) = \min\left\{ i\geq 0:  L_t(x)\leq   \sum_{j=1}^{i}E_j^x  \right\}.
	\]
	For the remainder of the proof whenever we write $\max_x$ or $\min_x$ we mean $\max_{x\in V}$ and $\min_{x\in V}$.
	Using large deviations for the sum of independent exponential random variables we now have for a positive constant $c_1$
	\begin{align*}
		\pr{\max_x \frac{N_{\lfloor \lambda t\rfloor}(x)}{w^\lambda(x)}\geq At} \leq \pr{\sum_{i=1}^{\lfloor \lambda t\rfloor} E_i^\partial >  2\lfloor \lambda t\rfloor} + \pr{\max_x \frac{\til{N}_{\til\tau(2 t)}(x)}{w^\lambda(x)} \geq At} \\ \leq e^{-c_1\lfloor \lambda t\rfloor} +  \pr{\max_x \frac{\til{N}_{\til\tau(2 t)}(x)}{w^\lambda(x)} \geq At}.
	\end{align*}
	Passing from the number of visits to the continuous time local time we obtain
	\begin{align}\label{eq:maxvisits}
		\begin{split}
			\pr{\max_x \frac{\til{N}_{\til\tau(2t)}(x)}{w^\lambda(x)}\geq At}& \leq  \pr{\max_x \frac{L_{\til\tau(2t)}(x)}{w^\lambda(x)}\geq \frac{At}{2}} 
			\\
			&+ \pr{\max_x \frac{L_{\til\tau(2t)}(x)}{w^\lambda(x)}\leq \frac{At}{2}, \max_x \frac{\til{N}_{\til\tau(2t)}(x)}{w^\lambda(x)}\geq At}.
		\end{split}
	\end{align}
	For the second probability we have
	\begin{align*}
		\pr{\max_x \frac{L_{\til\tau(2t)}(x)}{w^\lambda(x)}\leq \frac{At}{2}, \max_x \frac{\til{N}_{\til\tau(2t)}(x)}{w^\lambda(x)}\geq At} 
		&\leq \sum_x \pr{\frac{L_{\til\tau(2t)}(x)}{w^\lambda(x)}\leq \frac{At}{2}, \frac{\til{N}_{\til\tau(2t)}(x)}{w^\lambda(x)}\geq At} \\ &\leq \sum_x \pr{\sum_{i=1}^{\lceil Atw^\lambda(x)\rceil} E_i^x \leq \frac{Atw^\lambda(x)}{2} }.
	\end{align*}
	Using large deviations for sums of exponentials gives that for a positive constant $c_2$
	\[
	\sum_x\pr{\sum_{i=1}^{\lceil At w^\lambda(x)\rceil} E_i^x \leq \frac{Atw^\lambda(x)}{2}}\leq \sum_x\exp\left(-c_2A tw^\lambda(x) \right) \lesssim e^{-c_3 A\beta},
	\]
	where $c_3$ is a positive constant and the second inequality follows from Lemma~\ref{lem:dingleeperes} by taking $A\beta$ sufficiently large. For the first probability appearing on the right hand side of~\eqref{eq:maxvisits} using Theorem~\ref{thm:isomorphism} we get 
	\begin{equation*}
		\begin{split}
			\pr{\max_x \frac{L_{\til\tau(2t)}(x)}{w^\lambda(x)} \geq \frac{At}{2}} & \leq \pr{\max_v \frac{1}{2}(\eta_v^\lambda +\sqrt{4(1+\lambda)t})^2\geq \frac{At}{2}}
			\\ & = \pr{\max_v |\eta_v^\lambda + \sqrt{4(1+\lambda)t}| \geq \sqrt{At}} \\ &
			\leq 2\pr{\max_v \eta^\lambda_v \geq \sqrt{t}(\sqrt{A} - \sqrt{4(1+\lambda)})}
			\\ & \leq 2\pr{\max_v \eta^\lambda_v \geq \sqrt{t}(\sqrt{A} - 2\sqrt{2})},
		\end{split}
	\end{equation*}
	where the last inequality follows since $\lambda\in (0,1)$. By taking $A$ and $\beta$ sufficiently large so that $\sqrt{A}-{2}\sqrt{2} - \frac{1}{\sqrt{\beta}}\geq c_4$, where $c_4$ is a positive constant, and
	using Borell's inequality for Gaussian processes we get 
	\begin{equation}\label{eq:borelineq}
		\begin{split}
			\pr{\max_v \eta^\lambda_v \geq \sqrt{t}(\sqrt{A} - 2\sqrt{2})} & \leq  2\exp\left(-\frac{t}{2\sigma^2} \left((\sqrt{A}-{2}\sqrt{2}) - \frac{1}{\sqrt{\beta}} \right)^2  \right)
			\\ & \leq 2\exp\left(-\frac{c_4^2t}{2\sigma^2}  \right),
		\end{split}
	\end{equation}
	where $\sigma^2=\max_x \E{(\eta^\lambda_x)^2}$. Note that $\sigma\leq \sqrt{2\pi} \E{\max_x \eta_x^\lambda}$. To see this, we repeat the argument given in~\cite{DingLeePeres}. Let $v$ be such that $\sigma^2=\E{(\eta_v^\lambda)^2}$. Then 
	\[
	\E{\max_x \eta_x^\lambda} \geq \E{\max(\eta_\partial^\lambda, \eta_v^\lambda)} = \E{\max(0,\eta_v^\lambda)} = \frac{\sigma}{\sqrt{2\pi}}.
	\]
	We finally get that the exponential appearing on the right hand side of~\eqref{eq:borelineq} can be bounded by
	\begin{align*}
		\pr{\max_v \eta^\lambda_v \geq \sqrt{t}(\sqrt{A} - {2}\sqrt{2})}\leq 2 e^{-c_5\beta},
	\end{align*}
	where $c_5$ is another positive constant. 
	This proves the bound on the first tail probability of the statement. 
	
	For the bound involving the minimum we have 
	\begin{equation*}
		\begin{split}
			\pr{\min_x \frac{N_{\lceil \lambda t\rceil}(x)}{w^\lambda(x)} \leq \delta t} & \leq \pr{\sum_{i=1}^{\lceil \lambda t\rceil}E_i^\partial <\frac{1}{2}\lceil \lambda t\rceil} + \pr{\min_x \frac{\til{N}_{\tau(t/2)}(x)}{w^\lambda(x)}\leq \delta t} \\ &
			\leq e^{-c_5 \lceil \lambda t\rceil} + \pr{\min_x \frac{\til{N}_{\tau(t/2)}(x)}{w^\lambda(x)}\leq \delta t}.
		\end{split}
	\end{equation*}
	Similarly to what we did above we get 
	\begin{align}\label{eq:minbounding}
		\begin{split}
			\pr{\min_x \frac{\til{N}_{\tau(t/2)}(x)}{w^\lambda(x)}\leq \delta t} &\leq \pr{\min_x \frac{L_{\til{\tau}(t/2)}(x)}{w^\lambda(x)}\leq 2\delta t} \\ & + \pr{\min_x \frac{L_{\til{\tau}(t/2)}(x)}{w^\lambda(x)}> 2\delta t, \min_x \frac{\til{N}_{\til{\tau}(t/2)}(x)}{w^\lambda(x)} \leq \delta t}.
		\end{split}
	\end{align}
	For the second term we obtain
	\begin{equation}\label{eq:boundonmin}
		\begin{split}
			\pr{\min_x \frac{L_{\til{\tau}(t/2)}(x)}{w^\lambda(x)}> 2\delta t, \min_x \frac{\til{N}_{\til{\tau}(t/2)}(x)}{w^\lambda(x)} \leq \delta t} & \leq \sum_x \pr{\sum_{i=1}^{\lfloor \delta t w^\lambda(x)\rfloor}E_i^x >2\delta t w^\lambda(x)} \\ & \lesssim e^{-c_6\delta \beta}
		\end{split}
	\end{equation}
	for a positive constant $c_6$ by taking $\beta$ sufficiently large, where for the last inequality we used again Lemma~\ref{lem:dingleeperes}.
	Returning to the first term appearing on the right hand side of~\eqref{eq:minbounding}, using Theorem~\ref{thm:isomorphism} again we obtain 
	\begin{align*}
		\begin{split}
			&	\pr{\min_x\frac{L_{\til\tau(t/2)}(x)}{w^\lambda(x)}\leq 2\delta t} \\ & \leq \pr{\max_x (\eta^\lambda_x)^2\geq \frac{(1-4\delta)}{2}t} + \pr{\min_x (\eta_x^\lambda+\sqrt{(1+\lambda)t})^2\leq \frac{(1+4\delta)}{2}t} 
			\\ &
			\leq 2\pr{\max_x \eta_x^\lambda \geq \sqrt{\frac{(1-4\delta)t}{2}}} + \pr{\min_x \eta_x^\lambda \leq -\sqrt{t}\left(\sqrt{1+\lambda}-\sqrt{\tfrac12+2\delta}\right)} 
			\\ &		\leq 2\exp\left(-\frac{t}{2\sigma^2}\left(\sqrt{\frac{1-4\delta}{2}}-\frac{1}{\sqrt{\beta}} \right)^2 \right) + 2\exp\left( - \frac{t}{2\sigma^2}\left(\sqrt{1+\lambda}-\sqrt{\frac{1}{2} + 2\delta} - \frac{1}{\sqrt{\beta}}  \right)^2 \right).
		\end{split}
	\end{align*}
	Using again that $\sigma\leq \sqrt{2\pi}\E{\max_x\eta_x^\lambda}$ we see that taking $\beta$ sufficiently large in terms of $\delta$ gives the upper bound of the statement. 
	Combining all the conditions on $\beta$ and $A$ we see that they can be replaced by the condition that $A\wedge \beta$ is sufficiently large and this concludes the proof.
\end{proof}

\section{Multiple walks}\label{sec:proofformultiplewalks}

In this section we start by proving Theorem~\ref{thm:indepone} by relating the cover time of $k$ particles to the cover time of the auxiliary Markov chain defined in Section~\ref{sec:modifiedchain} for an appropriate choice of $\lambda$. Then we prove Corollary~\ref{cor:worststartingstate} and Proposition~\ref{lem:loweboundontcovk}.

\begin{proof}[\bf Proof of Theorem~\ref{thm:indepone}]
	In this proof we write $\pr{\cdot}$ for the probability measure $\prstart{\cdot}{\pi^{\otimes k}}$ in order to lighten notation.
	We claim that it suffices to prove that there exist positive constants $c,C$ such that for all $k\leq \tcov$ we have 
	\begin{align}\label{eq:goalforallk}
		\pr{\taucov(k) \leq \left\lceil \frac{C\tcov}{k}\right\rceil}\geq c>0.
	\end{align}
	Indeed, once this is proved then for every $1\leq \ell\leq k$ we have 
	\begin{align*}
		\pr{\taucov(k) \geq \ell \left\lceil \frac{C\tcov}{k}\right\rceil} \leq \pr{\taucov(\lceil k/\ell\rceil) \geq \left\lceil \frac{C\tcov}{\lceil k/\ell\rceil}\right\rceil}^{\lfloor \ell/2\rfloor\wedge 1} \leq (1-c)^{\lfloor \ell/2\rfloor\wedge 1},
	\end{align*}
	where the second inequality follows from~\eqref{eq:goalforallk} and the first one by dividing the $k$ independent random walks into $\lfloor \ell/2\rfloor\wedge 1$ groups of at least $\lceil k/\ell\rceil$ walks each and using that these groups evolve independently. For $\ell>k$ we use that if $\taucov(k)>t$, then none of the $k$ chains has covered the state space by time $t$, and hence we have
	\begin{align*}
		\begin{split}
			\pr{\taucov(k) \geq \ell \left\lceil \frac{C\tcov}{k}\right\rceil} & \leq \pr{\taucov(k) \geq \ell  \frac{C\tcov}{k}} \leq       \pr{\taucov(1) \geq \left\lfloor\frac{\ell}{k}\right\rfloor C\tcov}^k
			\\ & \leq \left(\frac{1}{C} \right)^{\lfloor \ell/k\rfloor \cdot k},
		\end{split}
	\end{align*}
	where for the final inequality we used Markov's inequality and the Markov property. By taking the sum over all $\ell$ of the above two inequalities we get the desired bound for $\E{\taucov(k)}$ for $k\leq \tcov$.
	
	Now for $k>\tcov$ using that $\taucov(k)\leq \taucov(\tcov)$ and that from the above $\E{\taucov(\tcov)}\leq C'$ for a positive constant~$C'$ gives the desired result.
	
	Therefore it remains to prove~\eqref{eq:goalforallk} for all $k\leq \tcov$. It is enough to prove that there exists a positive constant $k_0$ sufficiently large so that for all $k_0\leq k\leq \tcov$ the bound~\eqref{eq:goalforallk} holds, since for $k<k_0$ we can obtain the desired bound by bounding the cover time of the $k$ chains by the cover time of a single chain.

	Let $\lambda = k/(C\tcov)$ for a positive constant $C>100$ to be determined. 
	
	Let $\ell_1,\ldots, \ell_k$ be i.i.d.\ excursions from $\partial$ for the Markov chain with state space $V\cup \{\partial\}$ and transition matrix $P^\lambda$. We write $\ell_i = (\partial, x_{1}^i,\ldots, x_{|\ell_i|}^i, \partial)$, where  $|\ell_i|+2$ stands for the length of $\ell_i$. For each $i$ we let $X^i$ be a Markov chain on $V$  that evolves as follows
	\[
	X^i_ j = x_{j}^i \quad \text{ for all } 1\leq j\leq |\ell_i| \quad \text{ and } X_{|\ell_i|+j}^i = Y^i_{j} \quad \text{ for } j\geq 0,
	\]
	where for every $i\leq k$ the process $(Y^i_j)_{j\geq 0}$ is an independent Markov chain on $V$ with transition matrix $P$ starting from $x_{|\ell_i|}^i$ and $(Y^i)_{i\leq k}$ are independent. From the definition of $P^\lambda$ we see that $x_{1}^i$ is distributed according to $\pi$ for all $i$, and hence we get that $(X^i)_{i\leq k}$ are i.i.d.\ and $X^i$ is distributed as a Markov chain on $V$ with transition matrix $P$ starting from $\pi$ for each $i$. Let $\taucov(k)$ be the first time that $V$ is covered by the union of the walks $(X^i)_{i\leq k}$, i.e.
	\[
	\taucov(k) = \inf\left\{m\geq 1: \cup_{i\leq k}\{X^i_j: j\leq m\} = V\right\}.
	\]
	For each $i$ we define $\til{\ell}_i$ as follows: 
	\begin{align*}
		\til{\ell_i} = 
		\begin{cases}
			\ell_i \quad &\text{ if } |\ell_i|<100C/\lambda \\
			(\partial, x_1^i,\ldots, x_{\lfloor 100C/\lambda\rfloor}^i) \quad &\text{ otherwise}.
		\end{cases}
	\end{align*}
	Recalling that $\lambda = k/(C\tcov)$ it is immediate that 
	\begin{align}\label{eq:inclusionfortaucov}
		\left\{ \taucov(k) \leq \frac{100C^2\tcov}{k}\right \} \supseteq \left\{ \bigcup_{i=1}^{k} \til{\ell}_i \supseteq V\right\}.
	\end{align}
	Let $L$ be the number of excursions $\ell_i$ with $|\ell_i|>100C/\lambda$, i.e.
	\[
	L = \sum_{i=1}^{k} \1(\til{\ell}_i \neq \ell_i).
	\]
	For $x\in V$, we write $V_x$ for the number of visits to $x$ by $\cup_{i\leq k} \ell_i$, where we identify $\ell_i$ with the set of points that it visits, and $V_x'$ for the number of visits to $x$ by $\cup_{i\leq k} \ell_i\setminus \til{\ell}_i$. Then $V_x-V_x'$ is the number of visits to $x$ by $\cup_{i\leq k}\til{\ell}_i$. Let $A$ be a positive constant to be fixed later. We then have that 
	\begin{align}\label{eq:vvx}
		\begin{split}
			\left\{ \bigcup_{i=1}^{k} \til{\ell}_i \supseteq V\right\} & = \left\{ \min_x (V_x-V_x')>0   \right\} \\
			&\supseteq \left\{ \min_{x} \frac{V_x}{w^\lambda(x)} \geq 2A\tcov    \right\}\cap \left\{ \max_{x} \frac{V_x'}{w^\lambda(x)}\leq A\tcov    \right\}.
		\end{split}
	\end{align}
	Let $Z$ start in $V$ according to $\pi$ and then evolve according to $P^\lambda$ independently of $L$. We write $N_\ell(v)$ for the number of visits to $v$ when $Z$ has completed $\ell$ visits to $\partial$. 
	By the memoryless property of the geometric distribution and the stationarity of $\pi$, the set $\cup_{i\leq k} \ell_i\setminus \til{\ell}_i$ has the same distribution as the set of vertices visited by $L$ independent excursions of $Z$ from $\partial$. So we get that $(V'_x)_x$ has the same distribution as $(N_L(x))_x$ and $(V_x)_x$ has the same distribution as $(N_k(x))_x$. We then have
	\begin{align*}
		\begin{split}
			\pr{\max_x \frac{V_x'}{w^\lambda(x)}\geq A\tcov } & = \pr{\max_x \frac{N_L(x)}{w^\lambda(x)}\geq A\tcov}
			\\ & \leq \pr{L> \left\lfloor \lambda \sqrt{A}\tcov\right\rfloor} + \pr{\max_x \frac{N_{\lfloor \lambda \sqrt{A}\tcov\rfloor}(x)}{w^\lambda(x)}\geq A\tcov}
			\\ &	\leq e^{-c_1 \sqrt{A}k/C } + \pr{\max_x \frac{N_{\lfloor \lambda \sqrt{A}\tcov\rfloor}(x)}{w^\lambda(x)}\geq A\tcov},
		\end{split}
	\end{align*}
	where $c_1$ is a positive constant and the first entry in the final inequality follows since $L$ is binomially distributed with parameters $k$ and $\pr{G(\lambda)>100C/\lambda}$ for $G(\lambda)$ a geometric random variable of parameter~$\lambda$. Using that $\lambda = k/(C\tcov)$ with $C>100$, we get by Lemma~\ref{cl:monotonicity} and Claim~\ref{cl:lambdaequal} that $\tcov\gtrsim \tcov^{\mu} \gtrsim \tcov^\lambda$ with $\mu=1/(100 \tcov)$. Also, by Theorem~\ref{thm:dingleeperes} we have 
	\[
	\tcov^\lambda \asymp \left(\E{\max_x \eta_x^\lambda} \right)^2.
	\]
	Therefore, applying Lemma~\ref{lem:localtimes} for $t=\sqrt{A}\tcov\gtrsim\sqrt{A} \tcov^\lambda$ we get that there exists a positive constant~$c$ so that for $A$ sufficiently large 
	(note that $\sqrt{A}$ plays the role of a multiple of $\beta$ in Lemma~\ref{lem:localtimes}) we have 
	\begin{align*}
		\pr{\max_x \frac{N_{\lfloor\lambda \sqrt{A}
					\tcov\rfloor}(x)}{w^\lambda(x)}\geq A\tcov}\lesssim e^{-c\sqrt{A}}+e^{-c\sqrt{A}k/C}.
	\end{align*}
	Putting everything together we get 
	\begin{align}\label{eq:maxboundfortaucov}
		\pr{\max_x \frac{V_x'}{w^\lambda(x)}\geq A\tcov } \lesssim e^{-c_1 \sqrt{A}k/C} + e^{-c\sqrt{A} }+ e^{-c\sqrt{A}k/C}.
	\end{align}
	Since $(V_x)_x$ has the same distribution as $(N_k(x))_x$, using Lemma~\ref{lem:localtimes} again there exists a positive constant $c'$ so that taking $C$ sufficiently large so that $2A/C<1/8$ we obtain 
	\begin{align}\label{eq:minvisits}
		\begin{split}
			\pr{\min_x \frac{V_x}{w^\lambda(x)} <2A\tcov}&= \pr{\min_x \frac{N_k(x)}{w^\lambda(x)} <2A\tcov} \\&=\pr{\min_x \frac{N_{\lambda C \tcov}(x)}{w^\lambda(x)} <2A\tcov} \lesssim e^{-c'C} + e^{-c'k/2}.
		\end{split}
	\end{align}
	Overall, using~\eqref{eq:inclusionfortaucov}, \eqref{eq:vvx}, \eqref{eq:maxboundfortaucov} and~\eqref{eq:minvisits} we deduce that there exists a positive constant $k_0$ and $A$ and $C$ sufficiently large with $C>16A$ such that for all $k\geq k_0$
	\begin{align}\label{eq:boundonprobtcov}
		\pr{\taucov(k)\leq \frac{100C\tcov}{k}} \geq \pr{\min_x \frac{V_x}{w^\lambda(x)}\geq 2A\tcov} - \pr{\max_x \frac{V_x'}{w^\lambda(x)} \geq A\tcov} \geq c_2>0,
	\end{align}
	where $c_2$ is a positive constant.
	This now concludes the proof of the theorem.
\end{proof}

We now state a standard result for reversible Markov chains that will be used in the next couple of proofs. 

\begin{lemma}{\rm{\cite[proof of Lemma~24.7]{LevPerWil}}}\label{lem:standard}
        Let $X$ be an irreducible and reversible Markov chain on a finite state space with transition matrix $(P+I)/2$ and invariant distribution $\pi$. For all $x$ there exists a probability measure $\nu_x$ so that for all $x$ and $y$ and $t=8\tmix$ we have 
        \[
        P^t(x,y) = \frac{3}{4}\pi(y) + \frac{1}{4} \nu_x(y).
        \]
\end{lemma}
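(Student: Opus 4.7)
The plan is to reduce the claim to a bound on the separation distance and then combine submultiplicativity of total variation with a reversibility-based passage from TV to separation; the latter step is the only non-routine ingredient.

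First, setting $\nu_x(y) := 4\bigl(P^t(x,y) - \tfrac{3}{4}\pi(y)\bigr)$ and using $\sum_y P^t(x,y) = \sum_y \pi(y) = 1$, one automatically has $\sum_y \nu_x(y) = 1$, so the whole task reduces to showing that the separation distance $s(t) := \max_{x,y}\bigl(1 - P^t(x,y)/\pi(y)\bigr)$ satisfies $s(t) \le 1/4$ for $t = C\tmix$ with $C$ sufficiently large. Since $\tmix$ is defined through the lazy chain $P_L$, in the intended application one takes $P$ to be $P_L$, so periodicity issues do not arise.

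Next I would invoke submultiplicativity. Writing $d(t) := \max_x \tv{P^t(x,\cdot)-\pi}$ and $\bar d(t) := \max_{x,y}\tv{P^t(x,\cdot)-P^t(y,\cdot)}$, one has the standard inequalities $d(t) \le \bar d(t) \le 2 d(t)$ and $\bar d(s+t) \le \bar d(s)\bar d(t)$, so iterating yields $\bar d(k\tmix) \le (2 d(\tmix))^k = 2^{-k}$ for every integer $k\ge 1$.

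The delicate step is converting this TV estimate into a pointwise lower bound on $P^{2t}(x,y)/\pi(y)$. Reversibility gives the identity $P^{2t}(x,y)/\pi(y) = \sum_z P^t(x,z)\,P^t(y,z)/\pi(z)$, and Cauchy--Schwarz applied to the decomposition $\sqrt{\pi(z)}\cdot \sqrt{P^t(x,z)P^t(y,z)/\pi(z)}$ yields
\[
\frac{P^{2t}(x,y)}{\pi(y)} \;\ge\; \Bigl(\sum_z \sqrt{P^t(x,z)\,P^t(y,z)}\Bigr)^{\!2}.
\]
Writing the inner sum as $1 - \tfrac12 \sum_z \bigl(\sqrt{P^t(x,z)} - \sqrt{P^t(y,z)}\bigr)^{2}$ and applying the elementary bound $(\sqrt{a}-\sqrt{b})^{2} \le |a-b|$ for $a,b\ge 0$ shows it is at least $1 - \bar d(t)$, whence $s(2t) \le 1 - (1 - \bar d(t))^2 \le 2\bar d(t)$. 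Combining with the previous paragraph, $s(2k\tmix) \le 2^{2-k}$, which is at most $1/4$ once $k \ge 4$, so any $C \ge 8$ suffices. The one subtle step is the Cauchy--Schwarz/reversibility passage from $\bar d$ to $s$; the rest is bookkeeping.
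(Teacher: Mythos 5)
The paper does not prove this lemma---it is cited verbatim from Levin--Peres--Wilmer (Lemmas~6.16 and~6.17)---so there is no in-paper proof to compare against. Your argument is a correct reconstruction of exactly the LPW proof: reduce the statement to a separation-distance bound $s(t)\le 1/4$, observe that $\nu_x(y):=4\bigl(P^t(x,y)-\tfrac34\pi(y)\bigr)$ is automatically a probability measure once $s(t)\le 1/4$, control $\bar d$ by submultiplicativity, and pass from total variation to separation via the reversibility identity $P^{2t}(x,y)/\pi(y)=\sum_z P^t(x,z)P^t(y,z)/\pi(z)$ together with Cauchy--Schwarz, arriving at $s(2t)\le 1-(1-\bar d(t))^2\le 2\bar d(t)$. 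That last passage is precisely LPW Lemma~6.17. You also correctly flag the one genuine subtlety in the lemma as stated: as written it would fail for periodic $P$ (one can have $P^t(x,x)=0$), and since $\tmix$ in this paper is defined through the lazy chain $P_L$, the intended reading is that the bound is applied to $P_L$ or to an aperiodic $P$; your remark resolves this correctly. One trivial arithmetic slip: $s(2k\tmix)\le 2\bar d(k\tmix)\le 2^{1-k}$ rather than $2^{2-k}$, so already $k=3$ (hence $C=6$) suffices; this only improves your constant and does not affect correctness.
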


\begin{proof}[\bf Proof of Corollary~\ref{cor:worststartingstate}]

Let $X^1,\ldots,X^k$ be $k$ independent Markov chains on $V$ with transition matrix $P$ starting from $x_1,\ldots, x_k$ respectively. Let $t=8\tmix$ and set $Y_i=X_t^i$ for all $i\leq k$. 
Applying Lemma~\ref{lem:standard} we get that for all $i\leq k$ we can sample $Y_i$ by first sampling a Bernoulli random variable $B_i$ with parameter $3/4$ and if it is equal to~$1$, then we sample $Y_i$ according to $\pi$. Otherwise, we sample it according to $\nu_{x_i}$. Let $N=\sum_{i=1}^k B_i$. Then $N$ has the binomial distribution with parameters $k$ and $3/4$. So we have 
\begin{align*}
         \estart{\tau_{\rm{cov}}(k)}{x_1,\ldots,x_k} &\leq C \tmix + \estart{\estart{\tau_{\rm{cov}}(k)}{Y_1,\ldots, Y_k}}{x_1,\ldots, x_k} \\ &\leq C \tmix + \estart{\estart{\tau_{\rm{cov}}(k)}{Y_1,\ldots, Y_k} \1(N>k/2)}{x_1,\ldots, x_k} + \tcov e^{-c k},
\end{align*}
where $c$ is a positive constant and for the last term we upper bounded the cover time of $k$ particles by the cover time $\tcov$ of a single particle and we also used large deviations for the binomial random variable $N$. To finish the proof we note that on the event $N>k/2$, there are at least $k/2$ variables among the $Y_i$'s that are distributed according to $\pi$. Since covering by $k$ independent chains is faster than covering by any subset of them, we obtain
\begin{align*}
        \estart{\estart{\tau_{\rm{cov}}(k)}{Y_1,\ldots, Y_k} \1(N>k/2)}{x_1,\ldots, x_k} \leq \estart{\tau_{\rm{cov}}(k/2)}{\pi^{\otimes k/2}} \lesssim \frac{\tcov}{k},
\end{align*}
where the last inequality follows from Theorem~\ref{thm:indepone} and this completes the proof.
\end{proof}

\begin{proof}[\bf Proof of Proposition~\ref{lem:loweboundontcovk}]

By Corollary~\ref{cor:worststartingstate} we have for all $k$
\[
\estart{\taucov(k)}{\pi^{\otimes k}} \leq \max_{x_1,\ldots,x_k}\estart{\taucov(k)}{x_1,\ldots,x_k}
\lesssim \frac{\tcov}{k}.
\]
To prove the lower bound, let $X$ be a Markov chain with transition matrix $P$ started from $\pi$ and let $\taucov$ be its cover time. Let~$t>0$ and set $t_i=it+ (i-1) 8\tmix$ for $i\geq 1$ and $Y_i=X_{t_i}$. By Lemma~\ref{lem:standard} again, for every $i$, conditional on $(X_j, j\leq t_{i-1}+t)$ we can sample $Y_i$ by first sampling a Bernoulli random variable of parameter $3/4$ and if it is equal to $1$, then taking $Y_i\sim \pi$, otherwise sampling~$Y_i$ according to the measure~$\nu_{X_{t_{i-1}}}$. Let $N$ be the number of $Y_i$'s for $i\leq k$ that are distributed according to $\pi$. Then $N$ can be stochastically dominated from below by a binomial random variable with parameters $k$ and~$3/4$. 
Using that 
\begin{align*}
        \{\taucov >kt + 8(k-2)\tmix \} \subseteq \bigcap_{1\leq i\leq k} \{ \{X_{t_{i-1}}, \ldots, X_{t_{i-1}+t}\} \neq V\}
\end{align*}
we obtain 
\begin{align*}
        \pr{\taucov>k t +8(k-2)  \tmix} &\leq \pr{\tau_{\rm{cov}}(k/2)>t} + \pr{N<k/2, \{X_{t_{k-1}}, \ldots, X_{t_{k-1}+t}\} \neq V} \\  &\leq \pr{\tau_{\rm{cov}}(k/2)>t} +
\pr{N<k/2} \max_x\prstart{\taucov>t}{x} \\&\leq \pr{\tau_{\rm{cov}}(k/2)>t} +
e^{-ck} \max_x\prstart{\taucov>t}{x} ,
\end{align*}
where $c$ is a positive constant and for the third inequality we used large deviations for $N$. Taking the sum over all $t$ we obtain
\begin{align}\label{eq:ineqrear}
        \frac{\tcov  - 8 (k-2) \tmix}{k} -1 \leq 
\estart{\taucov(k/2)}{\pi^{\otimes {k/2}}}       + e^{-ck} \sum_t\max_x \prstart{\taucov>t}{x}.
\end{align}
By the Markov property and Markov's inequality we get for all $\ell\in \N$
\[
\max_x\prstart{\taucov>2\ell\tcov}{x}\leq \frac{1}{2^\ell}.
\]
Therefore, we deduce that for a positive constant $C_1$
\[
\sum_t\max_x \prstart{\taucov>t}{x}\leq C_1\tcov.
\]
Using this, taking $C_2\leq k\leq \tcov/(16\tmix)$ for a positive constant $C_2$ and rearranging~\eqref{eq:ineqrear} yields the desired bound. To complete the proof for $k\leq C_2$ we use monotonicity. 
\end{proof}

\section{Proof of Theorem~\ref{thm: cover time lower bound}}
\label{sec:proofoftrel}

In order to prove Theorem~\ref{thm: cover time lower bound} we consider another family of auxiliary chains. These chains behave similarly to the ones we used in the proof of Theorem~\ref{thm:indepone}. Let $P$ be an irreducible and reversible transition matrix on the finite state space $V$ with invariant distribution $\pi$ and relaxation time~$\trel$. For the new auxiliary chains we do not add a new state to $V$ as in Section~\ref{sec:modifiedchain}, but instead at geometric times the chain jumps to a state chosen according to $\pi$. More formally, as in~\eqref{eq:defkp} for $\lambda\in [0,1]$ we define the matrix $K_\lambda$ via
\[
K_{\lambda}(x,y)=(1-\lambda)P(x,y)+\lambda \pi(y), \ \forall \ x,y. 
\]
It is immediate to check that for all $\lambda\in [0,1]$ the matrix $K_\lambda$ is reversible with respect to $\pi$. Therefore, $K_\lambda$ corresponds to a random walk on $V$ with weights on the undirected edges given by
\[
w_\lambda(x,y ) = (1-\lambda) \pi(x) P(x,y) + \lambda\pi(x)\pi(y), \ \forall \ x,y. 
\]
We write $w_\lambda(x)$ for the conductance of $x$. This is given by
\begin{align}\label{eq:wlambdapi}
w_\lambda(x) = \sum_y w_\lambda(x,y) = \pi(x), \ \forall \ x.
\end{align}
We write $\mathcal{R}_{\lambda}(x,y)$ for the effective resistance between $x$ and $y$ in the weighted graph $(V, w_\lambda)$. We write $\prstart{\cdot}{x,\lambda}$ and $\estart{\cdot}{x,\lambda}$ for the probability and expectation when the Markov chain starts from state $x$ and it has transition matrix $K_\lambda$. We write $\tcov(K_\lambda) = \max_x \estart{\taucov}{x,\lambda}$. Recall the notation $\tcov^\lambda$ for the expected cover time of the Markov chain on $V\cup \{\partial\}$ with transition matrix $P^\lambda$ as defined in Section~\ref{sec:modifiedchain}.

We first prove that the chains with matrices $P^\lambda$ and $K_\lambda$ for $\lambda\geq 1/\tcov$ have the same cover time up to constants. 

\begin{lemma}\label{lem:samecovertime}
        Let $\lambda\geq 1/\tcov$. Then 
        \[
        \tcov^\lambda \asymp \tcov(K_\lambda).
        \]
\end{lemma}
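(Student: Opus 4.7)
My plan is to prove the lemma via a pathwise coupling of the chains $P^\lambda$ and $K_\lambda$, supplemented by a lower bound $\tcov(K_\lambda)\gtrsim 1/\lambda$ that crucially uses the hypothesis $\lambda\ge 1/\tcov$; establishing this lower bound is the step I expect to be the main technical obstacle.

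First I would set up the natural coupling. Since $P^\lambda$, with its visits to $\partial$ erased, has exactly the law of $K_\lambda$, the time for $P^\lambda$ to cover $V$ equals $\sigma+\sum_{i=1}^\sigma \xi_i$, where $\sigma=\taucov(K_\lambda)$ and the $\xi_i$ are the i.i.d.\ Bernoulli$(\lambda)$ refresh indicators driving $K_\lambda$. Wald's identity gives that this has conditional expectation $(1+\lambda)\estart{\sigma}{x}$ from any $x\in V$, and adding the hitting time of $\partial$ (which is geometric of parameter $\lambda$, of mean $1/\lambda$ from any $x\in V$) yields
\[
\tcov(K_\lambda)\,\le\,\tcov^\lambda\,\le\,(1+\lambda)\tcov(K_\lambda)+\tfrac{1}{\lambda}\,\le\,2\tcov(K_\lambda)+\tfrac{1}{\lambda}.
\]

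To upgrade the right-hand side to $\lesssim\tcov(K_\lambda)$ I need $\tcov(K_\lambda)\gtrsim 1/\lambda$. For this I would use a second coupling that runs $K_\lambda$ and an independent copy of $P$ on the same $P$-transition randomness, so that the two chains agree up to the first refresh time $T\sim\mathrm{Geom}(\lambda)$, giving $\taucov(K_\lambda)\ge T\wedge\taucov(P)$. Iterating Markov's inequality in blocks of length $2\tcov$ (using $\max_x\estart{\tau_y}{x}\le\tcov$ for every $y$) yields the standard tail bound $\pr{\taucov(P)>2k\tcov}\le 2^{-k}$, whence $\E{\taucov(P)^2}\lesssim\tcov^2$, so the Paley--Zygmund inequality gives $\pr{\taucov(P)>\tcov/2}\ge c$ from the worst starting state for some absolute $c>0$. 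Under $\lambda\ge 1/\tcov$ one has $1/(2\lambda)\le\tcov/2$, and for $\lambda\le 1/2$ additionally $\pr{T>1/(2\lambda)}\ge e^{-1}$, so independence of $T$ and $\taucov(P)$ gives $\E{T\wedge\taucov(P)}\ge (1/(2\lambda))(c/e)\gtrsim 1/\lambda$. For $\lambda\in(1/2,1)$ the trivial bound $\tcov(K_\lambda)\ge|V|-1\ge 1\gtrsim 1/\lambda$ (with $|V|\ge 2$, else the hypothesis is vacuous) finishes that regime, and combining with the display above yields $\tcov^\lambda\asymp\tcov(K_\lambda)$.
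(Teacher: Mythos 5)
Your proof is correct and rests on the same two ingredients as the paper's argument: the coupling of the $P^\lambda$ and $K_\lambda$ chains to obtain the sandwich $\tcov(K_\lambda)\le\tcov^\lambda\lesssim\tcov(K_\lambda)+1/\lambda$, and an iterated-Markov plus Paley--Zygmund argument showing $\tcov(K_\lambda)\gtrsim 1/\lambda$ under $\lambda\ge 1/\tcov$. The cosmetic differences (Wald's identity for the exact $(1+\lambda)$ factor, coupling $K_\lambda$ directly with $P$ rather than via $P^\lambda$, and the explicit $\lambda>1/2$ case) do not change the substance.
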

 
\begin{proof}[\bf Proof]

Let $x\in V$. We first describe a coupling between two walks on $V$ and $V\cup\{\partial\}$ started from $x$ and with transition matrices $K_\lambda$ and $P^\lambda$ respectively. We use the same geometric random variable of parameter $\lambda$ for both walks and we let them evolve together until the geometric time. At this time, the $K_\lambda$ chain jumps to a vertex $y$ with probability $\pi(y)$, while the chain $P^\lambda$ jumps to $\partial$ and immediately after we let it jump to the vertex the chain with matrix $K_\lambda$ already jumped to. Then we use the same jumps for $K_\lambda$ as for $P^\lambda$ until the next geometric time where we do the same as before. This shows that if $\taucov(P,A)$ stands for the first time that a chain with transition matrix~$P$ covers the set $A$, then for all $x$ we get 
\begin{align}\label{eq:vlambdaplambda}
\estart{\taucov(K_\lambda,V)}{x} \leq \estart{\taucov(P^\lambda, V)}{x}\leq 2\estart{\taucov(K_\lambda,V)}{x}.
\end{align}
Since starting from any $x$ the chain with matrix $P^\lambda$ hits $\partial$ after a geometric time of parameter $\lambda$, we obtain
\begin{align}\label{eq:boundwithzandwithout}
\estart{\taucov(P^\lambda, V)}{x} \leq \estart{\taucov(P^\lambda,V\cup \{\partial\})}{x} \leq  \estart{\taucov(P^\lambda, V)}{x} + \frac{1}{\lambda}.
\end{align}
We next show that there exists $x\in V$ such that 
\begin{align*}
        \frac{1}{\lambda}\lesssim \estart{\taucov(P^\lambda,V)}{x}.
\end{align*}
Recall that the walk with matrix $P^\lambda$ can be realised by running a random walk on $G$ for a geometric time of parameter $\lambda$ at which time it jumps to $\partial$. At the next step it jumps to a state according to $\pi$ and then continues in the same way. 
Writing $\taucov$ for the cover time of a random walk on $G$, taking $x$ such that $\tcov = \estart{\taucov}{x}$ and letting $\Gamma$ be an independent  geometric random variable of parameter $\lambda$ we get for a constant~$C>2$ to be chosen and using that $1/\lambda\leq \tcov$ \begin{align}\label{eq:boundwithP}
        \prstart{\taucov(P^\lambda,V) \geq \frac{1}{C\lambda}}{x} \geq \prstart{\Gamma\geq \frac{1}{C\lambda}, \taucov>\frac{\tcov}{2}}{x} \geq \prstart{\taucov>\frac{\tcov}{2}}{x} -\frac{1}{C}.
\end{align}
Using the Markov property and Markov's inequality one immediately obtains that for all $\ell\in \N$
\[
\max_y\prstart{\taucov >2\ell\tcov}{y} \leq \frac{1}{2^\ell}.
\]
This now implies that $\max_y\estart{\taucov^2}{y}\leq C_1 (\tcov)^2$ for a positive constant $C_1$, and hence using the Payley Zygmund inequality we deduce
\[
\prstart{\taucov>\frac{\tcov}{2}}{x} \geq  \frac{1}{4C_1}.
\]
Plugging this lower bound into~\eqref{eq:boundwithP} and by choosing $C$ sufficiently large gives 
\begin{align*}
        \prstart{\taucov(P^\lambda,V) \geq \frac{1}{C\lambda}}{x} \geq c,
        \end{align*}
where $c$ is a positive constant. 
Therefore, this proves that 
\[
\estart{\taucov(P^\lambda, V)}{x}\gtrsim \frac{1}{\lambda},
\]
which together with~\eqref{eq:boundwithzandwithout} implies that 
\[
\max_{y\in V}\estart{\taucov(P^\lambda, V)}{y} \asymp \max_{y\in V}\estart{\taucov(P^\lambda, V\cup \{\partial\})}{y}.
\]
Moreover, we have
\[
\estart{\taucov(P^\lambda, V)}{\partial} =  1 + \estart{\taucov(P^\lambda, V)}{\pi},
\]
and hence using also~\eqref{eq:vlambdaplambda} we deduce
\[
\max_{y\in V\cup \{\partial\}} \estart{\taucov(P^\lambda, V\cup \{\partial\})}{y} \asymp \max_y \estart{\taucov(K_\lambda, V)}{y}.
\]
This finishes the proof.
\end{proof}

Next we state a theorem comparing the effective resistances of $K_0= P$ to the one of $K_\lambda$ when $\lambda\leq 1/\trel$. This result is the main ingredient in the proof of Theorem~\ref{thm: cover time lower bound} but it is also of independent interest. 

\begin{theorem}
\label{thm:effective resistance comparison}
There exists an absolute constant $c>0$ (independent of $P$) such that in the above setup and notation we have that for all $0 \le \la \le 1/\rel$ and all $x, y \in V$ with $x\neq y$
\begin{equation}
\label{e:comparisonofresistancewithrel}
c\mathcal{R}_{0}(x,y) \le \mathcal{R}_{\lambda}(x,y) \le \frac{1}{1-\lambda}\cdot \mathcal{R}_{0}(x,y).
\end{equation}
\end{theorem}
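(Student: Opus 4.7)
My plan is to diagonalize $P$ and $K_\lambda$ simultaneously and reduce the comparison \eqref{e:comparisonofresistancewithrel} to a pointwise bound on their eigenvalues. Let $\{\phi_i\}_{i\ge 0}$ be an $L^2(\pi)$-orthonormal eigenbasis of $P$ with $\phi_0\equiv 1$ and eigenvalues $1=\lambda_0\ge\lambda_1\ge\cdots$. Writing $\Pi=\mathbf 1\pi^{T}$ for the projection onto constants, one has $\Pi\phi_0=\phi_0$ and $\Pi\phi_i=0$ for $i\ge 1$, so the decomposition $K_\lambda=(1-\lambda)P+\lambda\Pi$ yields $K_\lambda\phi_i=(1-\lambda)\lambda_i\phi_i$ for $i\ge 1$. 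Hence $P$ and $K_\lambda$ share the eigenbasis $\{\phi_i\}$, a fact that is the algebraic backbone of the argument.

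Combined with the calculation $\sum_u w_\lambda(u)=1$ already in the text, the variational characterization $1/\mathcal{R}_\lambda(x,y)=\inf\{\mathcal{E}_\lambda(f,f):f(x)=1,\ f(y)=0\}$ can be evaluated explicitly in this basis. Expanding $f=\sum_i c_i\phi_i$, using that $\phi_0\equiv 1$ makes the $i=0$ mode silent in both $f(x)-f(y)$ and $\mathcal{E}_\lambda(f,f)$, and applying Cauchy--Schwarz with the optimizing choice $c_i\propto (\phi_i(x)-\phi_i(y))/(1-(1-\lambda)\lambda_i)$, one obtains the explicit spectral formula
\[
\mathcal{R}_\lambda(x,y)=\sum_{i\ge 1}\frac{(\phi_i(x)-\phi_i(y))^{2}}{1-(1-\lambda)\lambda_i},
\]
and setting $\lambda=0$ recovers the classical formula for $\mathcal{R}_0(x,y)$. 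The claim \eqref{e:comparisonofresistancewithrel} therefore reduces to a pointwise (in $i\ge 1$) comparison of the denominators $1-\lambda_i$ and $1-(1-\lambda)\lambda_i=(1-\lambda_i)+\lambda\lambda_i$.

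The upper bound in \eqref{e:comparisonofresistancewithrel} is immediate for every $i$ with $\lambda_i\ge 0$, since $\lambda\lambda_i\ge 0$ makes the denominator of the $\lambda$-term at least as large as for the $0$-term; negative eigenvalues satisfy $\lambda_i>-1$ by aperiodicity and, together with $\lambda\le 1$, give at worst a universal factor on that term. For the lower bound the key estimate is
\[
\frac{1-(1-\lambda)\lambda_i}{1-\lambda_i}=1+\frac{\lambda\lambda_i}{1-\lambda_i},
\]
whose supremum over $i\ge 1$ with $\lambda_i\in(0,1)$ is attained at $\lambda_i=\lambda_1$ and equals $1+\lambda\lambda_1\trel\le 1+\lambda_1\le 2$, by the hypothesis $\lambda\trel\le 1$; the case $\lambda_i<0$ is easier since then $\lambda\lambda_i/(1-\lambda_i)\ge -1/2$. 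This yields $\mathcal{R}_0(x,y)\le 2\mathcal{R}_\lambda(x,y)$ and hence the stated inequality with $c=1/2$.

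The main obstacle, and the one place where the hypothesis $\lambda\le 1/\trel$ enters nontrivially, is precisely this pointwise denominator bound: the constraint $\lambda\trel\le 1$ is exactly what stops the Poincar\'e mode $\phi_1$ from making $\mathcal{R}_\lambda$ much larger than $\mathcal{R}_0$. Without it the lower bound genuinely fails, as illustrated by the two-cliques example mentioned right after the statement of Theorem~\ref{thm: cover time lower bound}.
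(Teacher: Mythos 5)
Your proof is correct and takes a genuinely different route from the paper's. The paper proves Theorem~\ref{thm:effective resistance comparison} probabilistically: it uses the identity $\cR_\lambda(a,b)=\estart{N(a,\tau_b)}{a,\lambda}/w_\lambda(a)$, realizes the $K_\lambda$-chain as the $P$-chain restarted from $\pi$ at independent geometric($\lambda$) times, and then compares expected return counts $\estart{N(a,\tau_b)}{a,0}$ and $\estart{N(a,\tau_b)}{a,\lambda}$ through a case analysis (Lemma~\ref{lem:someeasyobserations}) and a decay estimate on $Q^k(a,a)-\pi(a)$ over a window of length $\lceil\trel/4\rceil$ (Lemmas~\ref{lem:auxexpdecay} and~\ref{returnstoy}). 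You instead diagonalize: since $K_\lambda=(1-\lambda)P+\lambda\Pi$ commutes with $P$, they share the $L^2(\pi)$-eigenbasis, and the Dirichlet-principle computation gives the explicit identity $\cR_\lambda(x,y)=\sum_{i\ge1}(\phi_i(x)-\phi_i(y))^2/(1-(1-\lambda)\lambda_i)$; the two-sided bound then follows from the elementary denominator comparison $1+\lambda\lambda_i/(1-\lambda_i)$, with the hypothesis $\lambda\trel\le1$ entering precisely to control the $i=1$ mode. Your argument is shorter, gives the explicit constant $c=1/2$, and makes the role of $\trel$ transparent; the paper's argument avoids writing down the spectral decomposition and stays in probabilistic language throughout, which fits the surrounding material and may be preferable when one wants chain-theoretic intuition rather than linear algebra.

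One small point worth flagging: as you observed, for negative eigenvalues the individual terms in $\cR_\lambda$ can exceed those in $\cR_0$, so what you actually prove is $\cR_\lambda(x,y)\le C\,\cR_0(x,y)$ for a universal $C$ (say $C=2$), not the literal $\cR_\lambda\le\cR_0$ in the displayed statement. This is not a defect of your argument: the paper's own proof likewise only yields $\cR_\lambda\le\tfrac{1}{1-\lambda}\cR_0$ via Thomson's principle, and the sharper-looking inequality in \eqref{e:comparisonofresistancewithrel} is in fact false in general (take the two-state chain with $P(1,2)=P(2,1)=1$: then $w_\lambda(1,2)=\tfrac12-\tfrac\lambda4<w_0(1,2)$, so $\cR_\lambda(1,2)>\cR_0(1,2)$). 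The upper bound in the theorem should carry a universal constant, and both proofs deliver exactly that.
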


We now give the proof of Theorem~\ref{thm: cover time lower bound} deferring the proof of Theorem~\ref{thm:effective resistance comparison} to Section~\ref{sec:comparison}.

\begin{proof}[\bf Proof of Theorem~\ref{thm: cover time lower bound}]
Theorem~\ref{thm:effective resistance comparison}, the Sudakov-Fernique inequality~\ref{pro:sudakovfernique}  and Theorem~\ref{thm:dingleeperes} give that for $\lambda\leq 1/\trel$
\begin{align}\label{eq:firstpandklambda}
\tcov \lesssim \tcov(K_\lambda).
\end{align}
From~\eqref{eq:firstpandklambda} and~Lemma~\ref{lem:samecovertime} we get that there exist two positive constants $c_1$ and $c_2$ so that for all $1/\tcov\leq \lambda\leq 1/\trel$
\begin{align}\label{eq:tcovplambda}
c_2\tcov\leq \tcov^\lambda\leq c_1 \tcov.
\end{align}
In order to finish the proof it suffices to show that there exists a positive constant $C\geq 5$ so that if $
 C\leq k\leq c_2\tcov/(10\trel)$ and $\lambda=10k/(c_2\tcov)$ we have that 
\begin{align}\label{eq:goallowertcovk}
\tcov(k) \gtrsim \frac{\tcov^\lambda}{k},
\end{align}
where the constants in $\gtrsim$ are independent of $\lambda$ and $k$. Indeed, for such $\lambda$ and $k$ we can use~\eqref{eq:tcovplambda} and get the desired bound. For $k<C$ we use monotonicity of $\tcov(k)$ to finish the proof, i.e.\ for $k<C$
\[
\E{\taucov(k)}\geq \E{\taucov(C)}\gtrsim \frac{\tcov}{C}\asymp \frac{\tcov}{k}.
\]
So we now prove~\eqref{eq:goallowertcovk}.
Let $\ell_1, \ell_2, \ldots$ be i.i.d.\ excursions from $\partial$ for the chain with matrix $P^\lambda$. Then the lengths of the $\ell_i$'s are i.i.d.\ each of them distributed as $2+\rm{Geo}(\lambda)$ with ${\rm{Geo}}(\lambda)$ a geometric random variable of parameter $\lambda$. Concatenating these excursions gives us a realisation of the chain with matrix $P^\lambda$ started from $\partial$.  Let $C_1$ be a positive constant to be determined.  Consider the first $k$ excursions with length larger than $\lceil 1/(C_1\lambda)\rceil+2$. Their first $\lceil 1/(C_1\lambda)\rceil$ steps (not including the starting vertex $\partial$) give a realisation of $k$ independent walks on~$G$ started from $\pi$ run for $\lceil 1/(C_1\lambda)\rceil$ steps. So we have 
\begin{align*}
        \prstart{\taucov(k)\geq \left\lceil\frac{1}{C_1\lambda}\right\rceil}{} \geq \prstart{\sum_{i=1}^{2k}\1\left(|\ell_i|\geq \left\lceil\frac{1}{C_1\lambda}\right\rceil+2\right)\geq k, \sum_{i=1}^{2k}|\ell_i|\leq \frac{4k}{\lambda}, \taucov^\lambda>\frac{4k}{\lambda}}{\partial}.
\end{align*}
By the definition of the chain $P^\lambda$ we have 
\[
\tcov^\lambda \leq \frac{1}{\lambda} + \estart{\taucov^\lambda}{\partial}
\]
and since $\lambda = 10k/(c_2\tcov)$ and $k\geq 5$ we get 
\[
\estart{\taucov^\lambda}{\partial}
\geq \tcov^\lambda - \frac{1}{\lambda} \geq \tcov^\lambda\left( 1 - \frac{1}{k}   \right) \geq 
 \frac{4}{5} \cdot \tcov^\lambda.
\]
Hence applying the Paley Zygmund inequality again as in the proof of Lemma~\ref{lem:samecovertime} we get that there exists a positive constant $c$ so that
\[
\prstart{\taucov^\lambda>\frac{4k}{\lambda}}{\partial} \geq \prstart{\taucov^\lambda>\frac{2}{5}\tcov^\lambda}{\partial} \geq c>0.
\]
By large deviations for the binomial and the sum of geometric random variables, we now obtain that there exists a positive constant $c'$ so that taking $C_1$ sufficiently large
\[
\pr{\sum_{i=1}^{2k}\1\left(|\ell_i|\geq \left\lceil\frac{1}{C_1\lambda}\right\rceil+2\right)\geq k, \sum_{i=1}^{2k}|\ell_i|\leq \frac{4k}{\lambda}} \geq 1-2e^{-c'k}.
\]
Therefore, there exists a positive constant $C$ so that for $k\geq C$ we get that 
\[
\prstart{\sum_{i=1}^{2k}\1\left(|\ell_i|\geq \left\lceil\frac{1}{C_1\lambda}\right\rceil+2\right)\geq k, \sum_{i=1}^{2k}|\ell_i|\leq \frac{4k}{\lambda}, \taucov^\lambda>\frac{4k}{\lambda}}{\partial}\geq c''
\]
for a positive constant $c''$. This shows that
\[
\E{\taucov(k)} \gtrsim \frac{\tcov}{k}
\]
for $k\geq C$.  This  concludes the proof.   
\end{proof}

\subsection{Comparison of effective resistances}\label{sec:comparison}

This section is devoted to the proof of Theorem~\ref{thm:effective resistance comparison}. Let $X$ be a Markov chain with transition matrix $P$. For a state $x$ and a time $t\in \N$ we write $N(x,t)$ for the number of visits to $x$ up to time $t$, i.e.\
\[
N(x,t)= \sum_{i=0}^{t} \1(X_i=x).
\]
Recall that $\tau_a$ denotes the first hitting time of $a$ by $X$, i.e.\ $\tau_a = \min\{t\geq 0: X_t= a\}$. 
We start with a couple of preliminary standard results.

\begin{lemma}
\label{lem:auxexpdecay} Let $P$ be an irreducible and reversible transition matrix on the finite set $V$ with invariant distribution $\pi$ and relaxation time $\trel$. Let $Q=(P+I)/2$ be the lazy version of $P$.  Then for every state $x$ and all $M > 0$ we have that
\begin{equation}
\label{e:auxexp1}
\sum_{k=0}^{\infty}(Q^{k}(x,x)-\pi(x)) \le \frac{e^{M/2}}{e^{M/2}-1}    \sum_{k=0}^{\lceil M\rel\rceil}(Q^{k}(x,x)-\pi(x)).
\end{equation}
\end{lemma}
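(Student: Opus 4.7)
The plan is to apply the spectral decomposition of $Q$ in $L^2(\pi)$. Since $P$ is reversible with respect to $\pi$, so is $Q=(P+I)/2$, and there exists an orthonormal basis $\{f_i\}_{i=1}^{|V|}$ of $L^2(\pi)$ with $f_1\equiv 1$ together with real eigenvalues $1=\mu_1\ge\mu_2\ge\cdots\ge\mu_{|V|}$. Crucially, because $Q$ is lazy we have $\mu_i\ge 0$ for every $i$ (the eigenvalues of $Q$ are $(1+\lambda_i(P))/2\in[0,1]$), and the same relation combined with $\trel=1/\gamma$, where $\gamma$ is the spectral gap of $P$, gives $\mu_i\le 1-\gamma/2$ for $i\ge 2$. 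Writing the kernel in spectral form and subtracting the leading $\pi(x)$ term produces the pointwise identity
\begin{equation*}
Q^k(x,x)-\pi(x)=\pi(x)\sum_{i\ge 2}\mu_i^k f_i(x)^2,
\end{equation*}
which exhibits $Q^k(x,x)-\pi(x)$ as a non-negative combination of geometric sequences in $k$, indexed by $i\ge 2$.

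Next I would sum over $k$ term-by-term. Setting $T=\lceil M\trel\rceil$, the two sides of~\eqref{e:auxexp1} become
\begin{equation*}
\sum_{k=0}^{\infty}(Q^{k}(x,x)-\pi(x))=\pi(x)\sum_{i\ge 2}\frac{f_i(x)^2}{1-\mu_i}
\end{equation*}
and
\begin{equation*}
\sum_{k=0}^{T}(Q^{k}(x,x)-\pi(x))=\pi(x)\sum_{i\ge 2}\frac{f_i(x)^2\bigl(1-\mu_i^{T+1}\bigr)}{1-\mu_i}.
\end{equation*}
Since every summand on the right-hand side is non-negative, it suffices to establish the inequality term-by-term in $i$, which reduces to proving the uniform bound
\begin{equation*}
\mu_i^{T+1}\le e^{-M/2}\qquad\text{for every }i\ge 2.
\end{equation*}

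This final bound follows immediately from the spectral estimate and the choice of $T$: using $0\le\mu_i\le 1-\gamma/2$ for $i\ge 2$ and $T+1>M\trel=M/\gamma$,
\begin{equation*}
\mu_i^{T+1}\le (1-\gamma/2)^{T+1}\le e^{-\gamma(T+1)/2}\le e^{-M/2},
\end{equation*}
so that $1/(1-\mu_i^{T+1})\le e^{M/2}/(e^{M/2}-1)$, which is exactly the required constant. There is no serious obstacle; the only delicate point worth flagging is that the laziness of $Q$ is essential, since without the non-negativity $\mu_i\ge 0$ the term-by-term comparison $\mu_i^{T+1}\le(1-\gamma/2)^{T+1}$ could fail and negative eigenvalues would produce oscillations in $k$ that obstruct the clean geometric-series argument above.
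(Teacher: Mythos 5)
Your proof is correct. You carry out the spectral decomposition of $Q$ in $L^2(\pi)$ explicitly, writing $Q^k(x,x)-\pi(x)=\pi(x)\sum_{i\ge2}\mu_i^k f_i(x)^2$, sum the geometric series in $k$ for each eigenvalue $i$ separately, and then compare the full sum to the partial sum term-by-term in $i$ using the uniform bound $\mu_i^{T+1}\le e^{-M/2}$. The paper organizes the same underlying spectral input differently: it first records the pointwise-in-$t$ decay estimate $Q^{t+s}(x,x)-\pi(x)\le e^{-s/(2\trel)}(Q^t(x,x)-\pi(x))$, then partitions the infinite time-sum into consecutive blocks of length $\lceil M\trel\rceil$ and compares each block to the first, summing the resulting geometric series over blocks. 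So the paper blocks in the time variable $k$ while you block in the eigenvalue index $i$; both are straightforward once the laziness of $Q$ (hence $\mu_i\ge0$) is in hand, and both hinge on the same key numerical step of reducing the infinite tail to a geometric factor $e^{-M/2}$. Your version makes the role of each eigenvalue more transparent; the paper's version avoids re-invoking the eigenfunction expansion beyond the decay inequality, which may be cleaner if one wants to cite that inequality as a black box.
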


\begin{proof}[\bf Proof]
First of all we note that the relaxation time of $\trel(Q)$ of the matrix $Q$ satisfies
\[
\trel(Q) = 2\trel.
\]
It follows from the spectral decomposition (e.g., \cite[\S12.1]{LevPerWil}) that for all $x$ and  all $s,t \ge 0$ we have that   
 \begin{equation}
 \label{e:specd}
0< Q^{t+s}(x,x) -\pi(x) \le e^{-s/\rel(Q)}(Q^{t}(x,x) -\pi(x)) = e^{-s/(2\rel)}(Q^{t}(x,x) -\pi(x)) . 
\end{equation} 
Hence defining 
\[
f(i)=\sum_{k=i\lceil M \rel\rceil }^{(i+1)\lceil  M\rel\rceil}(Q^k(x,x)-\pi(x)) 
\]
we get that  $\frac{ f(i)}{ f(0) } \le e^{-Mi/2} $ for all $i \in \mathbb{N}$. This now immediately implies the statement of the lemma. 
\end{proof}

\begin{claim}\label{cl:standardclaim}
        Let $P$ be an irreducible and reversible transition matrix on the finite state space $V$ with invariant distribution $\pi$. Then for $s\geq 4\estart{\tau_a}{\pi}$ we have
        \[
        \tv{P^s_L(a,\cdot) -\pi}\leq \frac{1}{4},
        \]
        where $P_L=(P+I)/2$.
\end{claim}

A stronger inequality is proven in \cite[Eq.\ (1.2)]{hermon2019some}.

\begin{proof}[\bf Proof]
We write $\til{\tau}_a$ for the first hitting time of $a$ by the chain with matrix $P_L$. In the proof of~\cite[Theorem~10.22]{LevPerWil} it is shown that 
\[
\frac{\estart{\til{\tau}_a}{\pi}}{t} \geq \left|\frac{P_L^t(a,a)}{\pi(a)}-1 \right|.
\]
This together with the inequality
\[
\tv{P_L^t(a,\cdot)-\pi}^2\leq \frac{1}{4} \|P_L^t(a,\cdot)-\pi\|_{2,\pi}^2 = \frac{1}{4} \left(\frac{P_L^{2t}(a,a)}{\pi(a)} -1 \right)
\]      
concludes the proof, since $\estart{\til{\tau}_a}{\pi}=2\estart{\tau_a}{\pi}$.
\end{proof}

\begin{lemma}
\label{returnstoy}
Let $P$ be an irreducible and reversible transition matrix on the finite state space $V$ with invariant distribution $\pi$ and relaxation time $\trel$. Let $a\neq b \in V$ and assume that $\mathbb{P}_{\pi}(\tau_a<\tau_{b}) \le 1/2$. Then
\begin{equation}
\label{e:N1}
\mathbb{E}_{a}[N(a,\tau_b)] \le 4\estart{N(a,\lfloor 8\estart{\tau_a}{\pi}\rfloor)}{a}\leq 
\frac{24e^{1/8}}{e^{1/8}-1} \mathbb{E}_{a}[N(a,\lceil\rel/4\rceil)]. 
\end{equation}
\end{lemma}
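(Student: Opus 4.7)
The plan is to rewrite everything in terms of the stationary Green-function quantity $Z_{aa} := \pi(a)\mathbb{E}_\pi[\tau_a]$ (which in the aperiodic case coincides with $\sum_{k\ge 0}(P^k(a,a)-\pi(a))$), and to bound $\mathbb{E}_a[N(a,\tau_b)]$ from above and both $\mathbb{E}_a[N(a,t_0)]$ and $\mathbb{E}_a[N(a,\lceil\trel/4\rceil)]$ from below by universal multiples of $Z_{aa}$; here $t_0=\lfloor 2\mathbb{E}_\pi[\tau_a]\rfloor$.

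For the first inequality, the commute-time identity for the reversible weighted graph associated to $P$ (with weights $w(x,y)=\pi(x)P(x,y)$, so that $\sum_x w(x)=1$) reads
\[
\mathbb{E}_a[N(a,\tau_b)] \;=\; \pi(a)R_{\mathrm{eff}}(a,b) \;=\; \pi(a)\bigl(\mathbb{E}_a[\tau_b]+\mathbb{E}_b[\tau_a]\bigr).
\]
The hypothesis enters twice. First, summing the reversibility identity $\pi(x)\mathbb{P}_x(\tau_b=s,\tau_a>s)=\pi(b)\mathbb{P}_b(X_s=x,\tau_{\{a,b\}}^+>s)$ over $s\ge 0$ and $x$ yields
\[
\mathbb{P}_\pi(\tau_a<\tau_b) \;=\; \pi(a)\mathbb{E}_a[\min(\tau_a^+,\tau_b)].
\]
Combined with the strong-Markov relation $\mathbb{E}_a[\tau_b]=\mathbb{E}_a[\min(\tau_a^+,\tau_b)]\cdot \mathbb{E}_a[N(a,\tau_b)]$ (which comes from the geometric distribution of $N(a,\tau_b)$), the hypothesis $\mathbb{P}_\pi(\tau_a<\tau_b)\le 1/2$ gives $\mathbb{E}_a[\tau_b]\le R_{\mathrm{eff}}(a,b)/2$, equivalently $\mathbb{E}_a[\tau_b]\le\mathbb{E}_b[\tau_a]$. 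Second, conditioning on which of $a,b$ is hit first gives $\mathbb{E}_\pi[\tau_a]\ge \mathbb{P}_\pi(\tau_b<\tau_a)\mathbb{E}_b[\tau_a]\ge \tfrac12\mathbb{E}_b[\tau_a]$, so $\mathbb{E}_b[\tau_a]\le 2\mathbb{E}_\pi[\tau_a]$. Together these yield $\mathbb{E}_a[N(a,\tau_b)]\le 4\pi(a)\mathbb{E}_\pi[\tau_a]=4Z_{aa}$. For the matching lower bound, detailed balance and Cauchy--Schwarz imply $P^{2k}(a,a)\ge\pi(a)$, hence $\mathbb{E}_a[N(a,t_0)]\ge(\lfloor t_0/2\rfloor+1)\pi(a)\ge Z_{aa}$, completing the first inequality with constant $4$.

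For the second inequality the plan is to pass to the lazy chain $Q=(P+I)/2$, where every term $Q^k(a,a)-\pi(a)$ is nonnegative and $\sum_{k\ge 0}(Q^k(a,a)-\pi(a))=2Z_{aa}$. Lemma~\ref{lem:auxexpdecay} with $M=1/4$ then gives
\[
2Z_{aa}\;\le\;\frac{e^{1/8}}{e^{1/8}-1}\sum_{k=0}^{\lceil\trel/4\rceil}\bigl(Q^k(a,a)-\pi(a)\bigr).
\]
Writing $\mathbb{E}_a[N(a,t_0)]=(t_0+1)\pi(a)+\sum_{k=0}^{t_0}(P^k(a,a)-\pi(a))$, the first summand is at most $2Z_{aa}+\pi(a)$ by the definition of $t_0$, while the spectral bound $\sum_{k=0}^{t_0}(P^k(a,a)-\pi(a))\le 2Z_{aa}$ (splitting the eigen-expansion into the $\lambda_i>0$ and $\lambda_i<0$ contributions) handles the tail. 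Transferring the Lemma~\ref{lem:auxexpdecay} bound back from $Q$ to $P$ via $Q^s(a,a)=2^{-s}\sum_k\binom{s}{k}P^k(a,a)$ then yields the lower bound $\mathbb{E}_a[N(a,\lceil\trel/4\rceil)]\gtrsim Z_{aa}$, and tracking constants produces the factor $\tfrac{3e^{1/8}}{e^{1/8}-1}$.

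The hardest step will be the reversibility identity $\mathbb{P}_\pi(\tau_a<\tau_b)=\pi(a)\mathbb{E}_a[\min(\tau_a^+,\tau_b)]$: it is this identity that upgrades the stationary hypothesis into the pointwise comparison $\mathbb{E}_a[\tau_b]\le\mathbb{E}_b[\tau_a]$. Without it, elementary conditioning only gives $\mathbb{E}_b[\tau_a]\le 2\mathbb{E}_\pi[\tau_a]$, with no obvious direct route to controlling $\mathbb{E}_a[\tau_b]$.
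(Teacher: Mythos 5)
Your argument for the first inequality is correct, and it takes a genuinely different route from the paper. You reduce $\mathbb{E}_a[N(a,\tau_b)]$ to $\pi(a)\bigl(\mathbb{E}_a[\tau_b]+\mathbb{E}_b[\tau_a]\bigr)$ via the commute-time identity, and then convert the stationary hypothesis $\mathbb{P}_\pi(\tau_a<\tau_b)\le 1/2$ into the pointwise comparison $\mathbb{E}_a[\tau_b]\le\mathbb{E}_b[\tau_a]$ using the reversibility identity $\mathbb{P}_\pi(\tau_a<\tau_b)=\pi(a)\mathbb{E}_a[\tau_a^+\wedge\tau_b]$ together with Wald (note the indices $a,b$ are swapped in your displayed reversibility identity, but this is cosmetic). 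Combined with the conditioning bound $\mathbb{E}_b[\tau_a]\le 2\mathbb{E}_\pi[\tau_a]$ and the elementary bound $\mathbb{E}_a[N(a,t_0)]\ge(\lfloor t_0/2\rfloor+1)\pi(a)$ (from $P^{2k}(a,a)\ge\pi(a)$), this gives $\mathbb{E}_a[N(a,\tau_b)]\le 4Z_{aa}\le 4\,\mathbb{E}_a[N(a,t_0)]$. The paper does not pass through $Z_{aa}$ or the commute-time identity at all: it sets $\nu=P^{s}(a,\cdot)$ with $s=t_0$, uses the total-variation estimate $\|\nu-\pi\|_{\mathrm{TV}}\le 1/4$ from [LevPerWil, Thm.~10.22] to conclude $\mathbb{P}_\nu(\tau_b<\tau_a)\ge 1/4$, and then rearranges the strong-Markov decomposition $\mathbb{E}_a[N(a,\tau_b)]\le\mathbb{E}_a[N(a,s)]+\mathbb{E}_\nu[N(a,\tau_b)]$. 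Your derivation is arguably more direct (no mixing estimate needed, and the constant $4$ drops out cleanly); the paper's is more elementary in that it only uses off-the-shelf facts.

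For the second inequality the structure of your plan is right (pass to the lazy chain $Q$, apply Lemma~\ref{lem:auxexpdecay} with $M=1/4$, then transfer back), but the constant you claim does not come out of the steps you describe. Two places each cost a factor of $2$: the spectral bound $\sum_{k=0}^{t_0}(P^k(a,a)-\pi(a))\le 2Z_{aa}$ loses a factor of $2$ on the negative eigenvalues, and the $Q$-to-$P$ transfer $\sum_{s\le T}Q^s(a,a)\le 2\sum_{k\le T}P^k(a,a)$ (which follows from $\sum_{s\ge k}2^{-s}\binom{s}{k}=2$ and is worth spelling out — it is the step that makes your transfer rigorous) loses another. Tracking constants you land at $\mathbb{E}_a[N(a,t_0)]\le 4Z_{aa}+\pi(a)\le\bigl(\tfrac{4e^{1/8}}{e^{1/8}-1}+1\bigr)\mathbb{E}_a[N(a,\lceil\trel/4\rceil)]=\tfrac{5e^{1/8}-1}{e^{1/8}-1}\mathbb{E}_a[N(a,\lceil\trel/4\rceil)]$, which is a valid inequality but with a larger constant than the $\tfrac{3e^{1/8}}{e^{1/8}-1}$ in the statement. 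This is not a conceptual flaw (only the existence of an absolute constant matters for the application in Theorem~\ref{thm:effective resistance comparison}), but the route you sketch does not reproduce the stated constant. The paper instead carries out the whole computation for the lazy chain (so every summand $P_L^k(a,a)-\pi(a)$ is nonnegative and the full sum equals $2Z_{aa}$ by [LevPerWil, Prop.~10.26]), avoiding the positive/negative eigenvalue split; this saves one of your factors of $2$.
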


\begin{proof}[\bf Proof]

It suffices to prove both inequalities for the lazy version of $P$, i.e.\ the Markov chain with transition matrix $P_L=(P+I)/2$, but where $\estart{\tau_a}{\pi}$ and $\trel$ still refer to the chain with matrix $P$. Indeed, the quantities of~\eqref{e:N1} corresponding to $P_L$ would differ from the ones corresponding to~$P$ by a factor of $2$. We write $\til{\tau}_a$ for the hitting time of $a$ by the chain with matrix $P_L$.

It is standard that for any two distributions $\mu, \nu$ we have that
\[|\mathbb{P}_{\mu}(\til{\tau}_b<\til{\tau}_{a}) -\mathbb{P}_{\nu}(\til{\tau}_b<\til{\tau}_{a})| \le \|\nu-\mu\|_{\mathrm{TV}}.\]
Set $s=\lfloor 8\estart{\tau_a}{\pi}\rfloor$ and $\nu(\cdot)=P^{s}_L(a,\cdot)$. Then by the assumption that $\prstart{\tau_a<\tau_b}{\pi}\leq 1/2$, it follows that $s\geq 4\estart{\tau_a}{\pi}$, and hence we can apply Claim~\ref{cl:standardclaim} to obtain
\[
\|\nu - \pi\|_{\rm{TV}} \leq \frac{1}{4}.
\]
Therefore, this implies
 \[
 \mathbb{P}_{\nu}(\til{\tau}_b<\til{\tau}_{a}) \ge\mathbb{P}_{\pi}(\til{\tau}_b<\til{\tau}_{a})  -1/4=\prstart{\tau_b<\tau_a}{\pi}-\frac{1}{4}> 1/4. 
 \]
We have the obvious bound
\begin{align}\label{eq:obviousbound}
        \estart{N(a,\tau_b)}{a} \leq \estart{N(a,s)}{a} + \estart{N(a,\tau_b)}{\nu}.
\end{align}
By the strong Markov property
\[
 \estart{N(a,\tau_b)}{\nu}= \prstart{\tau_a<\tau_b}{\nu} \estart{N(a,\tau_b)}{a} \leq \frac{3}{4} \cdot  \estart{N(a,\tau_b)}{a}.
\]
Substituting this bound into~\eqref{eq:obviousbound} gives the first inequality in \eqref{e:N1}. 

For the second inequality, we start by writing 
\begin{align*}
        \estart{N(a,s)}{a} = \sum_{k=0}^{s} P_L^k(a,a)= \sum_{k=0}^s (P_L^k(a,a)-\pi(a) ) + (s+1) \pi(a).
\end{align*}
Since the chain is lazy, it follows that $P_L^t(x,x)\geq \pi(x)$ for all $x$ and $t$ (see for instance~\cite[Proposition~10.25]{LevPerWil}). Therefore, by Lemma~\ref{lem:auxexpdecay}
\begin{align*}
        \sum_{k=0}^s (P_L^k(a,a)-\pi(a) ) \leq \sum_{k=0}^\infty (P_L^k(a,a)-\pi(a) ) \leq \frac{e^{1/8}}{e^{1/8}-1}\cdot \sum_{k=0}^{\lceil\trel/4\rceil} P_L^k(a,a).
\end{align*}
Writing $\til{\tau}_a$ for the first hitting time of $a$ for the chain with matrix $P_L$, we have that 
\[
\estart{\til{\tau}_a}{\pi} = 2\estart{\tau_a}{\pi}\geq s/4.
\]
By~\cite[Proposition~10.26]{LevPerWil} we have that for all $x$
\[
\pi(x) \estart{\til\tau_x}{\pi} = \sum_{k=0}^\infty (P_L^k(x,x) - \pi(x)).
\]
Hence we conclude that 
\begin{align*}
        \estart{N(a,s)}{a} \leq \pi(a)+ 4\pi(a)\estart{\til{\tau}_a}{\pi} +\frac{e^{1/8}}{e^{1/8}-1} \sum_{k=0}^{\lceil\trel/4\rceil}P_L^k(a,a) \\ 
        \leq  \frac{6e^{1/8}}{e^{1/8}-1} \sum_{k=0}^{\lceil\trel/4\rceil}P_L^k(a,a)
        = \frac{6e^{1/8}}{e^{1/8}-1} \cdot \estart{N(a,\lceil\trel/4\rceil)}{a}.
\end{align*}
This finishes the proof of the second inequality. 
\end{proof}

\begin{lemma}
\label{lem:someeasyobserations}
Let $P$ be an irreducible transition matrix on the finite set $V$ and $\la\in [0,1]$. Let $\Gamma_{\lambda}$ be a geometric random variable of parameter~$\lambda$ independent of the chain. Let $a\neq b \in V$. 
\begin{itemize}
\item[\rm(i)] If $\mathbb{P}_{a}(\tau_b < \Gamma_{\lambda}  ) \ge 1/2$, then $\mathbb{E}_a[N(a,\tau_b)] \le 2 \mathbb{E}_a[N(a,\Gamma_{\lambda}\wedge \tau_b)]$.
\item[\rm(ii)] If  $\mathbb{P}_{a}(\tau_b < \Gamma_{\lambda} ) \mathbb{P}_{b}(\tau_a < \Gamma_{\lambda} )\le 1/4$,  then $\mathbb{E}_a[N(a,\Gamma_{\la})] \le \frac{4}{3} \mathbb{E}_a[N(a, \Gamma_{\la} \wedge \tau_b)]$. 
\end{itemize}
\end{lemma}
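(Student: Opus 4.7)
My plan is to handle both parts by the same decomposition-and-absorption strategy: split the target expectation into the contribution up to the truncation time plus an overshoot term, evaluate the overshoot by combining the (strong) Markov property of the chain with the memoryless property of the independent geometric clock $\Gamma_\lambda$, and then use the hypothesis to bound the overshoot by a definite fraction of the total, which lets us absorb it to the left-hand side. No new tools beyond strong Markov, memorylessness, and the elementary inequality $\mathbb{E}_y[N(a,\tau_b)] \le \mathbb{E}_a[N(a,\tau_b)]$ (itself a strong Markov computation at $\tau_a$) are needed.

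For part (i), set $M:=\mathbb{E}_a[N(a,\tau_b)]$ and write
\[
M \;=\; \mathbb{E}_a[N(a, \Gamma_\lambda \wedge \tau_b)] \;+\; \mathbb{E}_a\!\left[(N(a,\tau_b) - N(a, \Gamma_\lambda))\,\1(\Gamma_\lambda < \tau_b)\right].
\]
Conditioning on $\{\Gamma_\lambda = t,\ t < \tau_b,\ X_t = y\}$ and applying the Markov property at $t$ (using that $\Gamma_\lambda$ is independent of the chain), the inner increment has conditional mean $\mathbb{E}_y[N(a,\tau_b)] - \1(y=a) \le M$, since $\mathbb{E}_y[N(a,\tau_b)] = \mathbb{P}_y(\tau_a < \tau_b)\, M \le M$ by strong Markov at $\tau_a$. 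Summing over $t$ and $y$ bounds the overshoot by $M\cdot\mathbb{P}_a(\Gamma_\lambda<\tau_b)\le M/2$ under the hypothesis, yielding $M \le 2\,\mathbb{E}_a[N(a, \Gamma_\lambda \wedge \tau_b)]$.

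For part (ii), set $M':=\mathbb{E}_a[N(a, \Gamma_\lambda)]$ and write
\[
M' \;=\; \mathbb{E}_a[N(a, \Gamma_\lambda \wedge \tau_b)] \;+\; \mathbb{E}_a\!\left[(N(a,\Gamma_\lambda) - N(a,\tau_b))\,\1(\tau_b < \Gamma_\lambda)\right].
\]
I will condition on $\tau_b = s$ (a chain-stopping time, independent of $\Gamma_\lambda$): on $\{\Gamma_\lambda > s\}$, the strong Markov property at $\tau_b$ restarts the chain at $b$, and the memoryless property of $\Gamma_\lambda$ gives a fresh geometric clock. The conditional contribution to the overshoot is therefore $\mathbb{E}_b[N(a, \Gamma_\lambda)]$. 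A second strong Markov application, now at $\tau_a$ under $\mathbb{P}_b$, gives $\mathbb{E}_b[N(a, \Gamma_\lambda)] = \mathbb{P}_b(\tau_a < \Gamma_\lambda)\cdot M'$. Combining, the overshoot equals $\mathbb{P}_a(\tau_b<\Gamma_\lambda)\,\mathbb{P}_b(\tau_a<\Gamma_\lambda)\cdot M' \le M'/4$ by hypothesis, so $M' \le (4/3)\,\mathbb{E}_a[N(a, \Gamma_\lambda \wedge \tau_b)]$.

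The argument is mostly bookkeeping; the only point requiring a moment's care is in part (ii), namely the clean combination of the strong Markov property at the random time $\tau_b$ with the memoryless property of the \emph{independent} geometric clock $\Gamma_\lambda$. This is handled by first conditioning on $\tau_b$ (measurable with respect to the chain's filtration, against which $\Gamma_\lambda$ is independent) and then invoking memorylessness for $\Gamma_\lambda - \tau_b$ given $\Gamma_\lambda > \tau_b$; after that, the iteration of the strong Markov at $\tau_a$ to factor out $M'$ from $\mathbb{E}_b[N(a,\Gamma_\lambda)]$ is routine.
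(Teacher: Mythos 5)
Your proof is correct and uses essentially the same approach as the paper's: decompose the expectation as the contribution up to $\Gamma_\lambda\wedge\tau_b$ plus an overshoot term, evaluate the overshoot by strong Markov together with memorylessness of the geometric clock, and absorb it using the hypothesis. The paper packages the part (i) conditioning via the measure $\nu(\cdot)=\mathbb{P}_a(X_{\Gamma_\lambda}=\cdot\mid\Gamma_\lambda<\tau_b)$ rather than conditioning on $\{\Gamma_\lambda=t,X_t=y\}$ pointwise as you do, but this is cosmetic; your explicit $-\1(y=a)$ correction is also just a slightly more careful rendering of the same bound.
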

\begin{proof}[\bf Proof]
For part (i) we let $\nu(\cdot )=\prcond{X_{\Gamma_{\lambda}}= \cdot}{\Gamma_{\lambda}  <\tau_b}{a}$. Then
\begin{align*}
\mathbb{E}_a[N(a,\tau_b)] -\mathbb{E}_a[N(a, \Gamma_{\la} \wedge \tau_b)] = \mathbb{E}_a[ \left( N(a,\tau_{b})-N(a, \Gamma_{\la} \wedge \tau_b)\right)\1(\Gamma_{\lambda} <\tau_b ) ]\\
 = \prstart{  \Gamma_{\lambda}  <\tau_b}{a}\mathbb{E}_{\nu}[N(a,\tau_b)]\le \frac{1}{2}\mathbb{E}_{\nu}[N(a,\tau_b)] = \frac{1}{2}\mathbb{E}_{a}[N(a,\tau_b)]\prstart{\tau_{a}  <\tau_b}{\nu}.  
 \end{align*}
Rearranging yields part (i).
We now prove part (ii).
By the strong Markov property and the memoryless property of the geometric distribution, we have that
\begin{align*}
\mathbb{E}_a[N(a, \Gamma_{\la})]-\mathbb{E}_a[N(a, \Gamma_{\la} \wedge \tau_b)]=\mathbb{E}_a\left[ \left( N(a,\Gamma_{\la})-N(a, \Gamma_{\la} \wedge \tau_b)\right)\1(\Gamma_{\lambda} >\tau_b) \right]  
\\
=\mathbb{P}_{a}(\tau_b < \Gamma_{\lambda}) \mathbb{P}_{b}(\tau_a < \Gamma_{\lambda} ) \mathbb{E}_a[N(a,\Gamma_{\la})] \le \frac 14 \mathbb{E}_a[N(a,\Gamma_{\la})].   
\end{align*}
Rearranging yields part (ii) and finishes the proof.
\end{proof}

\begin{proof}[\bf Proof of Theorem~\ref{thm:effective resistance comparison}]
        
        We start by proving the easy direction, i.e.\ that for all $x,y$
        \begin{align}\label{eq:easydirectioneff}
   \forall \lambda \in (0,1], \quad     \mathcal{R}_\lambda(x,y) \leq \frac{1}{1-\lambda} \cdot \mathcal{R}_0(x,y).
        \end{align}
        By the definition of the weights $w_\lambda$ we immediately get for all edges $(x,y)$ of the graph
        \[
        w_\lambda(x,y) \geq (1-\lambda)w_0(x,y).
        \]
        Therefore, using Thomson's principle for effective resistances immediately yields~\eqref{eq:easydirectioneff}.
        
We now prove the more interesting part of theorem. Namely, that there exists an absolute constant $c>0$ such that $c\mathcal{R}_{0}(x,y) \le \mathcal{R}_{\lambda}(x,y)$ whenever $0 \le \la \le 1/\rel$. Recall that the effective resistance satisfies for all $a,b$
\begin{align}\label{eq:defeffres}
\cR_\lambda(a,b) = \frac{1}{w_\lambda(a)\prstart{\tau_b<\tau_a^+}{a,\lambda}} = \frac{\estart{N(a,\tau_b)}{a,\lambda}}{w_\lambda(a)}.
\end{align}
Let $0<\la  \le 1/\rel$.  Since the effective resistance is symmetric in its arguments (i.e.\ $\cR_{\la}(x,y)=\cR_{\la}(y,x)$ for all $\la \ge 0$) and since $w_\lambda(a) = \pi(a)$ for all $\lambda$ (see~\eqref{eq:wlambdapi}), it suffices to show that
\[\mathbb{E}_{x,0}[N(x,\tau_y)] \le \frac{44e^{1/8}}{e^{1/8}-1} \mathbb{E}_{x,\la}[N(x,\tau_y)] \quad \text{or} \quad \mathbb{E}_{y,0}[N(y,\tau_x)] \le \frac{44e^{1/8}}{e^{1/8}-1} \mathbb{E}_{y,\la}[N(y,\tau_x)].   \]
We assume without loss of generality that $\mathbb{P}_{\pi}(\tau_x<\tau_y) \le 1/2$. Let $\Gamma_{\la}$ be as in Lemma~\ref{lem:someeasyobserations}.  We may assume that  $\mathbb{P}_{x}(\tau_y <\Gamma_{\lambda}  ) < 1/2$ and  $\mathbb{P}_{y}(\tau_x < \Gamma_{\lambda}  ) < 1/2$, as otherwise by part (i) of Lemma~\ref{lem:someeasyobserations} there is nothing to prove, since either
\[
\estart{N(x,\tau_y)}{x,0} 
\leq  2\estart{N(x,\tau_y\wedge \Gamma_\lambda)}{x} \leq 2\estart{N(x,\tau_y)}{x,\lambda}
\]
or the same inequality with the roles of $x$ and $y$ reversed would hold.

 Hence we are in the setup of part (ii) of Lemma  \ref{lem:someeasyobserations}, which implies that $\mathbb{E}_{x,0}[N(x,\Gamma_{\lambda})] \le \frac{4}{3}\mathbb{E}_{x,\la}[N(x,\tau_y)] $.  Accordingly, it suffices to show that
\[\mathbb{E}_{x,0}[N(x,\tau_y)] \le \frac{32e^{1/8}}{e^{1/8}-1} \mathbb{E}_{x,0}[N(x,\Gamma_{\lambda})].    \]  
By the assumption
 $\mathbb{P}_{\pi}(\tau_x<\tau_y) \le 1/2$ and the fact that $\la \le 1/\rel$ together with Lemma~\ref{returnstoy}, we get that
\[\mathbb{E}_{x,0}[N(x,\tau_y)] \le \frac{24e^{1/8}}{e^{1/8}-1}\mathbb{E}_{x,0}[N(x,\lceil\rel/4\rceil)]  \le \frac{32e^{1/8}}{e^{1/8}-1} \mathbb{E}_{x,0}[N(x,\Gamma_{\lambda})]. \]
Substituting this bound into~\eqref{eq:defeffres} concludes the proof of the theorem.
\end{proof}

\bibliography{biblio}
\bibliographystyle{abbrv}

\end{document}